\theoremstyle{plain}
\newtheorem{thm}{Theorem}[section]
\newtheorem{lemma}[thm]{Lemma}
\newtheorem{cor}[thm]{Corollary}
\newtheorem*{propn}{Proposition}
\newtheorem{prop}[thm]{Proposition}
\theoremstyle{definition}
\newtheorem{defi}[thm]{Definition}
\newtheorem{rmk}[thm]{Remark}
\newtheorem{ex}[thm]{Example}
\theoremstyle{remark}
\begin{document}

\title{Synchronisation in Invertible Random Dynamical Systems on the Circle}
\author{Julian Newman}
\maketitle

\begin{abstract}
\noindent In this paper, we study geometric features of orientation-preserving random dynamical systems on the circle driven by memoryless noise that exhibit stable synchronisation: we consider crack points, invariant measures, and the link between synchronisation and compressibility of arcs; we also characterise stable synchronisation in additive-noise stochastic differential equations on the circle, in terms of ``subperiodicity'' of the vector field.
\end{abstract}

\section{Introduction}

It is well-known that any ``sufficiently noisy'' invertible random dynamical system on the circle driven by memoryless noise exhibits ``contraction of orbits'' or ``synchronisation'', in the sense that the distance between the trajectories of two given initial conditions almost surely converges to $0$ as time tends to $\infty$. In discrete time, we have the following:

\begin{propn}[{[Ant84], [Mal14]}]
Given a set $F \subset \mathrm{Homeo}^+(\mathbb{S}^1)$, equipped with the uniform topology, and a probability measure $\nu$ on $F$ with full support, if either:
\begin{enumerate}[\indent (a)]
\item there is no finite-order orientation-preserving homeomorphism other than $\mathrm{id}_{\mathbb{S}^1}$ that commutes with every $f \in F$, and for every $x \in \mathbb{S}^1$ and open $U \subset \mathbb{S}^1$, there exist $f_1,\ldots,f_n,\tilde{f}_1,\ldots,\tilde{f}_m \in F$ such that
\[ f_n \circ \ldots \circ f_1(x) \in U \hspace{4mm} \textrm{and} \hspace{4mm} x \in \tilde{f}_m \circ \ldots \circ \tilde{f}_1(U) \, ; \textrm{ or} \]
\item for every distinct $x,y \in \mathbb{S}^1$ there exist $f_1,\ldots,f_n \in F$ such that
\[ d( \, f_n \circ \ldots \circ f_1(x) \, , \, f_n \circ \ldots \circ f_1(y) \, ) \ < \ d(x,y) \]
and there does not exist $p \in \mathbb{S}^1$ such that for every $f \in F$, $f(p)=p$;
\end{enumerate}
then given any $x,y \in \mathbb{S}^1$, we have that for $\nu^{\otimes \mathbb{N}}$-almost all $(f_n)_{n \geq 1}$,
\[ d( \, f_n \circ \ldots \circ f_1(x) \, , \, f_n \circ \ldots \circ f_1(y) \, ) \ \to \ 0 \ \textrm{ as } n \to \infty. \]
\end{propn}

\noindent In case~(a), the result is due to [Ant84]; in case~(b), the result is due to [Mal14]. Moreover, it is shown in [Mal14] that the convergence occurs at an exponential rate.
\\ \\
Now by [Mal14, Theorem~A], we can add the following to the conclusion in the above proposition: \emph{given any $x \in \mathbb{S}^1$, we have that for $\nu^{\otimes \mathbb{N}}$-almost every $(f_n)_{n \geq 1}$, there exists a neighbourhood $U$ of $x$ such that}
\[ \mathrm{diam}( \, f_n \circ \ldots \circ f_1(U) \, ) \ \to \ 0 \ \textit{ as } n \to \infty. \]
(Again, the convergence is at an exponential rate.) This additional property implies physically that small unexpected perturbations to the evolution of the trajectories are unlikely to destroy the synchronisation described in the above proposition. Hence, we refer to synchronisation combined with this additional property as ``stable synchronisation''.
\\ \\
Synchronisation in continuous-time systems on the circle has been studied in [Crau02], and some specific examples in [Bax86]. Necessary and sufficient conditions for stable synchronisation in a more general context have been given in [New17].
\\ \\
The goal of this paper is to describe certain geometrical features of orientation-preserving random dynamical systems on the circle exhibiting stable synchronisation. Our results apply in both discrete and continuous time. In Section~2, we will introduce our setting. In Section~3, we will present a characterisation of stable synchronisation in terms of ``crack points''. On the basis of this, in Section~4 we will describe the ``invariant measures'' of systems exhibiting stable synchronisation. In Section~5, we will present a result linking contractibility for pairs of trajectories, compressibility for arcs, and stable synchronisation; we will then characterise stable synchronisation in additive-noise stochastic differential equations on the circle, in terms of ``subperiodicity'' of the vector field.

\section{Our setting}

Let $\mathbb{T}^+$ denote either $\mathbb{N} \cup \{0\}$ or $[0,\infty)$. Let $(\Omega,\mathcal{F},(\mathcal{F}_t)_{t \in \mathbb{T}^+},\mathbb{P})$ be a filtered probability space (the ``noise space''), and write $\mathcal{F}_+:=\sigma(\mathcal{F}_t:t \in \mathbb{T}^+)$. Let $(\theta^t)_{t \in \mathbb{T}^+}$ be a family of $\mathbb{P}$-preserving $(\mathcal{F},\mathcal{F})$-measurable functions $\theta^t:\Omega \to \Omega$ such that $\theta^0=\mathrm{id}_\Omega$ and $\theta^{s+t}=\theta^t \circ \theta^s$ for all $s,t \in \mathbb{T}^+$. Suppose moreover that:
\begin{enumerate}[\indent (i)]
\item $\theta^t$ is $(\mathcal{F}_{s+t},\mathcal{F}_s)$-measurable (i.e.~$\theta^{-t}\mathcal{F}_s \subset \mathcal{F}_{s+t}$) for all $s,t \in \mathbb{T}^+$;
\item for each $t \in \mathbb{T}^+$, $\mathcal{F}_t$ and $\theta^{-t}\mathcal{F}_+$ are independent $\sigma$-algebras according to $\mathbb{P}$ (i.e. $\mathbb{P}(E \cap \theta^{-t}(F)) \, = \, \mathbb{P}(E)\mathbb{P}(F)$ for all $E \in \mathcal{F}_t$ and $F \in \mathcal{F}_+$).
\end{enumerate}

\noindent (Here, we use the convention $\theta^{-t}(E):=(\theta^t)^{-1}(E)$.)
\\ \\
Heuristically, as suggested by (i), $\theta^t\omega$ represents a \emph{time-shift} of the noise realisation $\omega$ forward by time $t$. The fact that $\mathbb{P}$ is invariant under $(\theta^t)$ represents the assumption that the noise is strictly stationary, and condition~(ii) represents the assumption that the noise is memoryless. We emphasise that, whether we are considering a one-sided-time noise process or a two-sided-time noise process, $\mathcal{F}_t$ always represents the information available between time 0 and time $t$.

\begin{ex}[Gaussian white noise]
Following sections~A.2 and A.3 of [Arn98], an ``eternal'' one-dimensional Gaussian white noise process may be described according to the framework above as follows: Let $\Omega:=\{\omega \in C(\mathbb{R},\mathbb{R}) : \omega(0)=0\}$. For each $t \in [0,\infty)$, let $\mathcal{F}_t$ be the smallest $\sigma$-algebra on $\Omega$ with respect to which the projection $W_s:\omega \mapsto \omega(s)$ is measurable for every $s \in [0,t]$. Let $\mathcal{F}$ be the smallest $\sigma$-algebra on $\Omega$ with respect to which the projection $W_s:\omega \mapsto \omega(s)$ is measurable for every $s \in \mathbb{R}$. Let $\mathbb{P}$ be the \emph{Wiener measure} on $(\Omega,\mathcal{F})$---that is, $\mathbb{P}$ is the unique probability measure under which the stochastic processes $(W_t)_{t \geq 0}$ and $(W_{\!-t})_{t \geq 0}$ are independent Wiener processes. Finally, for each $\tau \geq 0$ and $s \in \mathbb{R}$, set $\theta^\tau\omega(s):=\omega(\tau+s)-\omega(\tau)$.
\end{ex}

\noindent Now let $\mathbb{S}^1$ be the unit circle, which we identify with $^{\mathbb{R}\!}/_{\!\mathbb{Z}\,}$ in the obvious manner, and let $l$ denote the Lebesgue measure on $\mathbb{S}^1$ (with $l(\mathbb{S}^1)=1$). Let $\pi:\mathbb{R} \to \mathbb{S}^1$ denote the natural projection, i.e.~$\pi(x)\,=x+\mathbb{Z} \, \in \, \mathbb{S}^1$; a \emph{lift} of a point $x \in \mathbb{S}^1$ is a point $x' \in \mathbb{R}$ such that $\pi(x')=x$, and a lift of a set $A \subset \mathbb{S}^1$ is a set $B \subset \mathbb{R}$ such that $\pi(B)=A$. Define the metric $d$ on $\mathbb{S}^1$ by
\[ d(x,y) \ = \ \min\{|x'-y'| : \, x' \textrm{ is a lift of } x, \, y' \textrm{ is a lift of } y \}. \]
\noindent Note that under this metric, for any connected $J \subset \mathbb{S}^1$,
\[ \mathrm{diam}\,J \ = \ \min\!\left(l(J),\tfrac{1}{2}\right)\!. \]
\noindent Let $\,\varphi \!\! = \!\! \left(\varphi(t,\omega)\right)_{t \in \mathbb{T}^+ \! , \, \omega \in \Omega}\,$ be a $(\mathbb{T}^+ \! \times \Omega)$-indexed family of orientation-preserving homeomorphisms $\varphi(t,\omega):\mathbb{S}^1 \to \mathbb{S}^1$ such that:
\begin{enumerate}[\indent (a)]
\item the map $(\omega,x) \mapsto \varphi(t,\omega)x$ is $(\mathcal{F}_t \otimes \mathcal{B}(\mathbb{S}^1),\mathcal{B}(\mathbb{S}^1))$-measurable for each $t \in \mathbb{T}^+$;
\item $\varphi(0,\omega) \, = \, \mathrm{id}_{\mathbb{S}^1}$ for all $\omega \in \Omega$;
\item $\varphi(s+t,\omega) \, = \, \varphi(t,\theta^s\omega) \circ \varphi(s,\omega)\,$ for all $s,t \in \mathbb{T}^+$ and $\omega \in \Omega$;
\item for any decreasing sequence $(t_n)$ in $\mathbb{T}^+$ converging to a time $t$, and any sequence $(x_n)$ in $\mathbb{S}^1$ converging to a point $x$, $\,\varphi(t_n,\omega)x_n \to \varphi(t,\omega)x\,$ as $n \to \infty$ for all $\omega \in \Omega$;
\item there exists a function $\varphi_-:\mathbb{T}^+ \times \Omega \times \mathbb{S}^1 \to \mathbb{S}^1$ such that for any strictly increasing sequence $(t_n)$ in $\mathbb{T}^+$ converging to a time $t$, and any sequence $(x_n)$ in $\mathbb{S}^1$ converging to a point $x$, $\,\varphi(t_n,\omega)x_n \to \varphi_-(t,\omega,x)\,$ as $n \to \infty$ for all $\omega \in \Omega$.
\end{enumerate}

\noindent We refer to $\varphi$ as a \emph{random dynamical system} (RDS) on $\mathbb{S}^1$; more specifically, since $\varphi(t,\omega)$ is a homeomorphism for all $t$ and $\omega$, we refer to $\varphi$ as an \emph{invertible RDS}. Conditions~(d) and (e) constitute the ``c\`{a}dl\`{a}g'' property, with (d) being right-continuity and (e) being left limits.\footnote{The left-limits property is included simply to ensure that ``asymptotic stability'' (defined as the existence of a neighbourhood of the initial condition that contracts in diameter to $0$ under the flow) implies stability in the sense of Lyapunov.} It is not hard to show that property~(d) implies the following:
\begin{enumerate}[\indent (d')]
\item for any decreasing sequence $(t_n)$ in $\mathbb{T}^+$ converging to a time $t$, and any sequence $(x_n)$ in $\mathbb{S}^1$ converging to a point $x$, $\,\varphi(t_n,\omega)^{-1}(x_n) \to \varphi^{-1}(t,\omega)(x)\,$ as $n \to \infty$ for all $\omega \in \Omega$.
\end{enumerate}

\noindent We will say that $\varphi$ is a \emph{continuous RDS} if for all $\omega \in \Omega$ the map $(t,x) \mapsto \varphi(t,\omega)x$ is jointly continuous. (In this case, $(t,x) \mapsto \varphi(t,\omega)^{-1}(x)$ is also jointly continuous for all $\omega$.)

\begin{defi}
We say that $\varphi$ is \emph{synchronising} if for all $x,y \in \mathbb{S}^1$,
\[ \mathbb{P}( \, \omega \, : \, d(\varphi(t,\omega)x,\varphi(t,\omega)y) \to 0 \textrm{ as } t \to \infty \, ) \ = \ 1. \]
\end{defi}

\begin{defi}
We say that $\varphi$ is \emph{everywhere locally stable} if for all $x \in \mathbb{S}^1$,
\[ \mathbb{P}( \, \omega \, : \, \exists \, \textrm{open } U \! \ni x \, \textrm{ s.t.~} l(\varphi(t,\omega)U) \to 0 \textrm{ as } t \to \infty \, ) \ = \ 1 \, ; \]
\noindent and we say that $\varphi$ is \emph{stably synchronising} if $\varphi$ is both synchronising and everywhere locally stable.
\end{defi}

\noindent An example of a system that is synchronising but not stably synchronising is the following: Within a \emph{deterministic} setting (i.e.~taking $\Omega$ to be just a singleton $\{\omega\}$), let $f:\mathbb{S}^1 \to \mathbb{S}^1$ be an orientation-preserving homeomorphism with a unique fixed point $p$, and (working in discrete time) take $\varphi(n,\omega):=f^n$. For every $x \in \mathbb{S}^1$, $f^n(x) \to p$ as $n \to \infty$. Hence in particular, for all $x,y \in \mathbb{S}^1$, $d(f^n(x),f^n(y)) \to 0$ as $n \to \infty$; and yet for every neighbourhood $U$ of $p$, $l(f^n(U))$ tends to 1 rather than to 0 as $n \to \infty$.

\section{Crack points}

Observe that for any $\omega \in \Omega$, the binary relation $\sim_\omega$ on $\mathbb{S}^1$ defined by
\[ x \sim_\omega y \hspace{3mm} \Longleftrightarrow \hspace{3mm} d(\varphi(t,\omega)x,\varphi(t,\omega)y) \to 0 \textrm{ as } t \to \infty \]
\noindent is an equivalence relation. It is easy to show (by considering only rational times) that the set $\,\{(x,y,\omega) \in \mathbb{S}^1 \times \mathbb{S}^1 \times \Omega \, : \, x \sim_\omega y \}\,$ is a $(\mathcal{B}(\mathbb{S}^1 \times \mathbb{S}^1) \otimes \mathcal{F}_+)$-measurable set.

\begin{defi}[c.f.~\textrm{[Kai93]}]
Given a point $r \in \mathbb{S}^1$ and a sample point $\omega \in \Omega$, we will say that $r$ is a \emph{crack point} of $\omega$ if the following equivalent statements hold:
\begin{itemize}
\item for every $A \subset \mathbb{S}^1$ with $r \nin \bar{A}$, $\mathrm{diam}(\varphi(t,\omega)A) \to 0$ as $t \to \infty$;
\item for every closed $G \subset \mathbb{S}^1$ with $r \nin G$, $l(\varphi(t,\omega)G) \to 0$ as $t \to \infty$;
\item for every open $U \subset \mathbb{S}^1$ with $r \in U$, $l(\varphi(t,\omega)U) \to 1$ as $t \to \infty$.
\end{itemize}
\end{defi}

\noindent Obviously, any sample point admits at most one crack point. If a sample point $\omega$ admits a crack point, then we will say that $\omega$ is \emph{contractive}.
\\ \\
Now if a sample point $\omega$ admits a crack point $r$, then either (a)~the equivalence relation $\sim_\omega$ has two equivalence classes, namely $\{r\}$ and $\mathbb{S}^1 \setminus \{r\}$, or (b)~the equivalence relation $\sim_\omega$ has one equivalence class (the whole of $\mathbb{S}^1$). In case~(a), we say that $r$ is a \emph{repulsive} crack point of $\omega$.

\begin{defi}
Let $\Omega_c \subset \Omega$ be the set of contractive sample points, and let $\tilde{r}:\Omega_c \to \mathbb{S}^1$ denote the function sending a contractive sample point $\omega$ onto its crack point $\tilde{r}(\omega)$.
\end{defi}

\begin{lemma} \label{crrfp}
$\Omega_c$ is $\mathcal{F}_+$-measurable, and $\tilde{r}:\Omega_c \to \mathbb{S}^1$ is measurable with respect to the $\sigma$-algebra $\mathcal{F}_c$ of $\mathcal{F}_+$-measurable subsets of $\Omega_c$. For each $t \in \mathbb{T}^+$, $\theta^{-t}(\Omega_c) = \Omega_c$ and $\tilde{r}(\theta^t\omega)=\varphi(t,\omega)\tilde{r}(\omega)$ for all $\omega \in \Omega_c$.
\end{lemma}

\begin{proof}
Let $R$ be a countable dense subset of $\mathbb{S}^1$. For any connected $J \subset \mathbb{S}^1$, it is clear (by considering rational times) that
\begin{equation} \label{meas}
\{ \omega \in \Omega \, : \, l(\varphi(t,\omega)J) \to 0 \textrm{ as } t \to \infty \} \; \in \, \mathcal{F}_+.
\end{equation}
\noindent So then, in order to show that $\Omega_c \in \mathcal{F}_+$, it suffices to prove the following statement: a sample point $\omega \in \Omega$ is contractive if and only if for every $n \in \mathbb{N}$ there is a connected open set $U_n \subset \mathbb{S}^1$ with endpoints in $R$ such that $1-\frac{1}{n} < l(U_n) < 1$ and $l(\varphi(t,\omega)U_n) \to 0$ as $t \to \infty$. Now the ``only if'' direction is obvious. For the ``if'' direction: suppose that for every $n \in \mathbb{N}$ there exists a connected open set $U_n \subset \mathbb{S}^1$ with endpoints in $R$ such that $1-\frac{1}{n} < l(U_n) < 1$ and $l(\varphi(t,\omega)U_n) \to 0$ as $t \to \infty$; and let $U:=\bigcup_{n=1}^\infty U_n$. Since $U_n$ is connected for all $n$ and $l(U_n) \to 1$ as $n \to \infty$, we clearly have that either $U=\mathbb{S}^1$ or $\mathbb{S}^1 \setminus \{U\}$ is a singleton. Now suppose, for a contradiction, that $U=\mathbb{S}^1$. Then, since $\mathbb{S}^1$ is compact, there is a finite subset $\{n_1,\ldots,n_k\}$ of $\mathbb{N}$ such that $\mathbb{S}^1=\bigcup_{i=1}^k U_{n_i}$; but since $l(\varphi(t,\omega)U_{n_i}) \to 0$ as $t \to \infty$ for each $i$, we then have that $l(\varphi(t,\omega)\mathbb{S}^1) \to 0$ as $t \to \infty$, which is absurd. So then, we must have that $\mathbb{S}^1 \setminus U$ is equal to a singleton $\{r\}$. We now show that $r$ is a crack point. Fix any closed $G \subset \mathbb{S}^1$ with $r \nin G$. Take $n$ such that $l(U_n)>1-d(r,G)$; then $G \subset U_n$ and so $l(\varphi(t,\omega)G) \to 0$ as $t \to \infty$. Hence $r$ is a crack point of $\omega$.
\\ \\
Thus we have shown that $\Omega_c$ is $\mathcal{F}_+$-measurable. Now for any $\omega \in \Omega_c$ and any non-empty closed connected $K \subset \mathbb{S}^1$, observe that $\tilde{r}(\omega) \in K$ if and only if for every closed connected $G \subset \mathbb{S}^1 \setminus K$ with $\partial G \subset R$, $l(\varphi(t,\omega)G) \to 0$ as $t \to \infty$. So by (\ref{meas}) and the countability of $R$, $\tilde{r}^{-1}(K) \in \mathcal{F}_c$ for every closed connected $K \subset \mathbb{S}^1$. Hence $\tilde{r}$ is $\mathcal{F}_c$-measurable.
\\ \\
Now fix any $t \in \mathbb{T}^+$ and $\omega \in \Omega$. First suppose that $\omega$ admits a crack point $r$:~then for any closed $G \not\ni \varphi(t,\omega)r$,
\[ \mathrm{diam}(\varphi(s,\theta^t\omega)G) \ = \ \mathrm{diam}(\varphi(s+t,\omega) \, (\varphi(t,\omega)^{-1}(G)) \, ) \to 0 \textrm{ as } s \to \infty \]
\noindent since $\varphi(t,\omega)^{-1}(G)$ is a closed set not containing $r$; so $\varphi(t,\omega)r$ is a crack point of $\theta^t\omega$. Now suppose that $\theta^t\omega$ admits a crack point $q$:~then for any closed $G \not\ni \varphi(t,\omega)^{-1}(q)$,
\[ \mathrm{diam}(\varphi(s+t,\omega)G) \ = \ \mathrm{diam}(\varphi(s,\theta^t\omega) \, (\varphi(t,\omega)G) \, ) \to 0 \textrm{ as } s \to \infty \]
\noindent since $\varphi(t,\omega)G$ is a closed set not containing $q$; so $\varphi(t,\omega)^{-1}(q)$ is a crack point of $\omega$. Thus we have proved that $\omega$ admits a crack point if and only if $\theta^t\omega$ admits a crack point, and that in this case, $\tilde{r}(\theta^t\omega)=\varphi(t,\omega)\tilde{r}(\omega)$.
\end{proof}

\begin{thm} \label{cr char}
$\mathbb{P}(\Omega_c)$ is equal to either $0$ or $1$. In the case that $\mathbb{P}(\Omega_c)=1$, either:
\begin{enumerate}[\indent (a)]
\item for every $x \in \mathbb{S}^1$, $\,\mathbb{P}(\omega \in \Omega_c \, : \, \tilde{r}(\omega) = x) \ = \ 0$; or
\item there exists a deterministic fixed point $p \in \mathbb{S}^1$ such that $\,\mathbb{P}(\omega \in \Omega_c \, : \, \tilde{r}(\omega) = p) \ = \ 1$.
\end{enumerate}
\noindent $\varphi$ is stably synchronising if and only if $\mathbb{P}(\Omega_c)=1$ and case~(a) holds. In this case, we also have that for $\mathbb{P}$-almost every $\omega \in \Omega_c$, $\tilde{r}(\omega)$ is a repulsive crack point of $\omega$.
\end{thm}

\begin{rmk}
In the case that there is no deterministic fixed point, the fact that stable synchronisation implies $\mathbb{P}(\Omega_c)=1$ can also be derived using results from [Mal14].
\end{rmk}

\subsection*{Proof of Theorem~\ref{cr char}}

\begin{lemma} \label{empf}
The measure-preserving flow $(\Omega,\mathcal{F}_+,\mathbb{P}|_{\mathcal{F}_+},(\theta^t)_{t \in \mathbb{T}^+})$ is ergodic.
\end{lemma}

\noindent  For a proof, see e.g.~[New15, Corollary~133].

\begin{cor}
$\mathbb{P}(\Omega_c)$ is equal to either $0$ or $1$.
\end{cor}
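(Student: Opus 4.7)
The proof should be essentially immediate from the two preceding results, and there is no real obstacle. The plan is as follows.

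By Lemma~4.3, the set $\Omega_c$ is $\mathcal{F}_+$-measurable, and for every $t \in \mathbb{T}^+$ we have $\theta^{-t}(\Omega_c) = \Omega_c$. In particular, $\Omega_c$ is a $(\theta^t)_{t \in \mathbb{T}^+}$-invariant element of $\mathcal{F}_+$.

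By Lemma~4.6, the measure-preserving flow $(\Omega,\mathcal{F}_+,\mathbb{P}|_{\mathcal{F}_+},(\theta^t)_{t \in \mathbb{T}^+})$ is ergodic, meaning that every $(\theta^t)$-invariant set in $\mathcal{F}_+$ has $\mathbb{P}$-measure $0$ or $1$. Applying this to $\Omega_c$ yields the conclusion.

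So the entire proof is a one-line combination of Lemma~4.3 (invariance of $\Omega_c$) and Lemma~4.6 (ergodicity of the shift on $\mathcal{F}_+$). No further work is needed.
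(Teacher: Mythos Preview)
Your proposal is correct and matches the paper's own proof exactly: the paper simply says ``Follows immediately from Lemmas~4.3 and 4.6,'' which is precisely the combination of the $(\theta^t)$-invariance and $\mathcal{F}_+$-measurability of $\Omega_c$ (Lemma~4.3) with the ergodicity of the shift on $(\Omega,\mathcal{F}_+,\mathbb{P}|_{\mathcal{F}_+})$ (Lemma~4.6).
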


\begin{proof}
Follows immediately from Lemmas~\ref{crrfp} and \ref{empf}.
\end{proof}

\noindent Now for each $x \in \mathbb{S}^1$ and $t \in \mathbb{T}^+$, define the probability measure $\bar{\varphi}_x^t$ on $\mathbb{S}^1$ by
\[ \bar{\varphi}_x^t(A) \ \, := \ \, \mathbb{P}( \, \omega \, : \, x \in \varphi(t,\omega)A \, ) \ = \ \mathbb{P}( \, \omega \, : \, x \in \varphi(t,\theta^s\omega)A \, ) \]
\noindent for all $A \in \mathcal{B}(\mathbb{S}^1)$ and any $s \in \mathbb{T}^+$. Given any $s,t \in \mathbb{T}^+$, observe that
\begin{itemize}
\item under the random map $\varphi(s,\omega)^{-1}\!:\mathbb{S}^1 \!\to \mathbb{S}^1$, the transition probability from a point $y$ to a set $A$ is precisely $\bar{\varphi}_y^s(A)$;
\item under the random map $\varphi(t,\theta^s\omega)^{-1}\!:\mathbb{S}^1 \!\to \mathbb{S}^1$, the transition probability from a point $x$ to a set $Y$ is precisely $\bar{\varphi}_x^t(Y)$;
\item under the random map $\varphi(s+t,\omega)^{-1}\!:\mathbb{S}^1 \!\to \mathbb{S}^1$, the transition probability from a point $x$ to a set $A$ is precisely $\bar{\varphi}_x^{s+t}(A)$.
\end{itemize}
\noindent Therefore, since the $\sigma$-algebras $\mathcal{F}_s$ and $\theta^{-s}\mathcal{F}_t$ are independent, the Chapman-Kolmogorov equation
\[ \bar{\varphi}_x^{s+t}(A) \ = \ \int_{\mathbb{S}^1} \bar{\varphi}_y^s(A) \; \bar{\varphi}_x^t(dy) \]
\noindent is satisfied for any $x \in \mathbb{S}^1$, $A \in \mathcal{B}(\mathbb{S}^1)$ and $s,t \in \mathbb{T}^+$. Moreover, since the map $(t,x) \mapsto \varphi(t,\omega)^{-1}(x)$ is jointly continuous in $x$ and right-continuous in $t$ for every $\omega \in \Omega$, the dominated convergence theorem gives that the map $(t,x) \mapsto \bar{\varphi}_x^t$ is (with respect to the topology of weak convergence) jointly continuous in $x$ and right-continuous in $t$.
\\ \\
We will say that a probability measure $\rho$ on $\mathbb{S}^1$ is \emph{reverse-stationary (with respect to $\varphi$)} if for all $t \in \mathbb{T}^+$ and $A \in \mathcal{B}(\mathbb{S}^1)$,
\[ \rho(A) \ = \ \int_\Omega \rho(\varphi(t,\omega)A) \, \mathbb{P}(d\omega). \hspace{0.2mm}\footnotemark \]
\footnotetext{If $\theta^\tau$ is a measurable automorphism of $(\Omega,\mathcal{F})$ for all $\tau \in \mathbb{T}^+$, then one can naturally define $\varphi(t,\omega)$ for negative $t$ by $\varphi(t,\omega):=\varphi(|t|,\theta^t\omega)^{-1}$. However, we emphasise that even in this case, in the definition of reverse-stationarity we must restrict to nonnegative $t$.}Note that for any $s \in \mathbb{T}^+$, since $\mathbb{P}$ is $\theta^s$-invariant this is equivalent to saying that for all $t \in \mathbb{T}^+$ and $A \in \mathcal{B}(\mathbb{S}^1)$,
\[ \rho(A) \ = \ \int_\Omega \rho(\varphi(t,\theta^s\omega)A) \, \mathbb{P}(d\omega). \]
Note also that $\rho$ is reverse-stationary if and only if $\rho$ is a stationary measure of the family of transition probabilities $(\bar{\varphi}_x^t)_{x \in \mathbb{S}^1 \! , \, t \in \mathbb{T}^+}$, i.e.
\[ \rho(A) \ = \ \int_{\mathbb{S}^1} \bar{\varphi}_x^t(A) \, \rho(dx) \]
for all $t \in \mathbb{T}^+$ and $A \in \mathcal{B}(\mathbb{S}^1)$. Therefore, by the Krylov-Bogolyubov theorem (e.g.~[New15, Theorem~114] or [Kif86, Lemma~5.2.1]), there must exist at least one reverse-stationary probability measure.

\begin{defi}
We say that a point $p \in \mathbb{S}^1$ is a \emph{deterministic fixed point} (\emph{of $\varphi$}) if $\mathbb{P}$-almost every $\omega \in \Omega$ has the property that for all $t \in \mathbb{T}^+$, $\varphi(t,\omega)p=p$.
\end{defi}

\begin{defi}
We say that a set $A \subset \mathbb{S}^1$ is \emph{forward-invariant} (\emph{under $\varphi$}) if $\mathbb{P}$-almost every $\omega \in \Omega$ has the property that for all $t \in \mathbb{T}^+$, $\varphi(t,\omega)A \subset A$.
\end{defi}

\noindent Note that a \emph{finite} set $P \subset \mathbb{S}^1$, the following statements are equivalent:
\begin{itemize}
\item $P$ is forward-invariant;
\item $\mathbb{S}^1 \setminus P$ is forward-invariant;
\item for each $t \in \mathbb{T}^+$,
\[ \mathbb{P}( \, \omega \, : \, \varphi(t,\omega)P = P ) \ = \ 1. \]
\end{itemize}
Now we say that a probability measure $\rho$ on $\mathbb{S}^1$ is \emph{atomless} if for all $x \in \mathbb{S}^1$, $\rho(\{x\})=0$.

\begin{lemma} \label{atomic}
Let $\rho$ be a probability measure that is ergodic with respect to the family of transition probabilities $(\bar{\varphi}_x^t)_{x \in \mathbb{S}^1 \! , \, t \in \mathbb{T}^+}$. Then either $\rho$ is atomless, or $\rho \, = \, \frac{1}{|P|}\sum_{x \in P} \delta_x$ for some finite forward-invariant set $P \subset \mathbb{S}^1$.
\end{lemma}

\begin{proof}
Suppose that $\rho$ is not atomless. Let $m:=\max\{ \rho(\{x\}) : x \in \mathbb{S}^1 \}$ and let $P:=\{x \in \mathbb{S}^1 : \rho(\{x\})=m\}$. For any $t \in \mathbb{T}^+$ and $\omega \in \Omega$, if $P \neq \varphi(t,\omega)P$ then $\rho(\varphi(t,\omega)P)<\rho(P)$; so since $\rho$ is reverse-stationary, we have that for each $t \in \mathbb{T}^+$,
\[ \mathbb{P}( \, \omega \, : \, P = \varphi(t,\omega)P ) \ = \ 1, \]
i.e.\ $P$ is forward-invariant. Note that $\bar{\varphi}_x^t(P)=1$ for each $x \in P$ and $t \in \mathbb{T}^+$. So since $\rho$ is ergodic with respect to $(\bar{\varphi}_x^t)_{x \in \mathbb{S}^1 \! , \, t \in \mathbb{T}^+}$ and $\rho(P)>0$, it follows that $\rho(P)=1$.
\end{proof}

\begin{cor} \label{Dirac}
Let $\rho$ be as in Lemma~\ref{atomic}, and suppose moreover that $\varphi$ is synchronising. Then $\rho$ is either atomless or a Dirac mass at a deterministic fixed point.
\end{cor}

\begin{proof}
Since $\varphi$ is synchronising, any finite forward-invariant set $P$ must be a singleton. So the result is immediate.
\end{proof}

\begin{lemma} \label{Dirrfp}
Suppose we have an $\mathcal{F}_+$-measurable function $q:\Omega \to \mathbb{S}^1$ with the property that for each $t \in \mathbb{T}^+$, for $\mathbb{P}$-almost all $\omega \in \Omega$, $\varphi(t,\omega)q(\omega)=q(\theta^t\omega)$. Then $q_\ast\mathbb{P}$ is an ergodic measure of the family of transition probabilities $(\bar{\varphi}_x^t)_{x \in \mathbb{S}^1 \! , \, t \in \mathbb{T}^+}$, and is either atomless or a Dirac mass at a deterministic fixed point.
\end{lemma}

\begin{proof}
First we show that $q_\ast\mathbb{P}$ is stationary with respect to $(\bar{\varphi}_x^t)$ (i.e.~is reverse-stationary with respect to $\varphi$). Note that for each $t$, the map $\omega \mapsto q(\theta^t\omega)$ is $\theta^{-t}\mathcal{F}_+$-measurable. For any $t \in \mathbb{T}^+$ and $A \in \mathcal{B}(\mathbb{S}^1)$,
\begin{align*}
\int_{\mathbb{S}^1} \bar{\varphi}_x^t(A) \; q_\ast\mathbb{P}(dx) \ &= \ \int_{\mathbb{S}^1} \bar{\varphi}_x^t(A) \ (q \circ \theta^t)_\ast\mathbb{P}(dx) \hspace{3mm} \textrm{(since $\mathbb{P}$ is $\theta^t$-invariant)} \\
&= \ \int_\Omega \bar{\varphi}_{q(\theta^t\omega)}^t(A) \, \mathbb{P}(d\omega) \\
&= \ \int_\Omega \mathbb{P}( \hspace{0.2mm} \tilde{\omega} \, : \, \varphi(t,\tilde{\omega})^{-1}(q(\theta^t\omega)) \in A \hspace{0.2mm} ) \; \mathbb{P}(d\omega) \\
&= \ \mathbb{P}( \hspace{0.2mm} \omega \, : \, \varphi(t,\omega)^{-1}(q(\theta^t\omega)) \in A \hspace{0.2mm} ) \\
&\hspace{10mm} \textrm{since $\mathcal{F}_t$ and $\theta^{-t}\mathcal{F}_+$ are independent $\sigma$-algebras} \\
&= \ \mathbb{P}( \hspace{0.2mm} \omega \, : \, q(\omega) \in A \hspace{0.2mm} ) \\
&= \ q_\ast\mathbb{P}(A).
\end{align*}
\noindent Hence $q_\ast\mathbb{P}$ is stationary with respect to $(\bar{\varphi}_x^t)$. Now let $A \in \mathcal{B}(\mathbb{S}^1)$ be a set such that for each $t \in \mathbb{T}^+$, for $(q_\ast\mathbb{P})$-almost every $x \in A$, $\bar{\varphi}_x^t(A)=1$; to prove that $q_\ast\mathbb{P}$ is ergodic with respect to $(\bar{\varphi}_x^t)$, we need to show that $q_\ast\mathbb{P}(A) \in \{0,1\}$. Let $E:=q^{-1}(A) \in \mathcal{F}_+$, and for each $t \in \mathbb{T}^+$ let
\[ \tilde{E}_t \ := \ \{ \omega \, : \, \varphi(t,\omega)^{-1}(q(\theta^t\omega)) \in A \}. \]
\noindent Obviously $\mathbb{P}(E \triangle \tilde{E}_t)=0$ for each $t$. So then
\begin{align*}
\mathbb{P}( \, E \, \cap \, \theta^{-t}(E) \, ) \ &= \ \int_{\theta^{-t}(E)} \mathbb{P}(E|\theta^{-t}\mathcal{F}_+)(\omega) \; \mathbb{P}(d\omega) \\
&= \ \int_{\theta^{-t}(E)} \mathbb{P}(\tilde{E}_t|\theta^{-t}\mathcal{F}_+)(\omega) \; \mathbb{P}(d\omega) \\
&= \ \int_{\theta^{-t}(E)} \mathbb{P}( \hspace{0.2mm} \tilde{\omega} \, : \, \varphi(t,\tilde{\omega})^{-1}(q(\theta^t\omega)) \in A \hspace{0.2mm} ) \; \mathbb{P}(d\omega) \\
&\hspace{10mm} \textrm{since $\mathcal{F}_t$ and $\theta^{-t}\mathcal{F}_+$ are independent $\sigma$-algebras} \\
&= \ \int_{\theta^{-t}(E)} \bar{\varphi}_{q(\theta^t\omega)}^t(A) \, \mathbb{P}(d\omega) \\
&= \ \int_A \bar{\varphi}_x^t(A) \; q_\ast\mathbb{P}(dx) \\
&= \ q_\ast\mathbb{P}(A) \\
&= \ \mathbb{P}(E).
\end{align*}
\noindent Hence $\mathbb{P}(E \setminus \theta^{-t}(E))=0$ for each $t$. Therefore, since $E \in \mathcal{F}_+$, Lemma~\ref{empf} gives that $\mathbb{P}(E) \in \{0,1\}$. So $q_\ast\mathbb{P}(A) \in \{0,1\}$, as required.
\\ \\
Thus we have shown that $q_\ast\mathbb{P}$ is ergodic with respect to $(\bar{\varphi}_x^t)$. Now suppose that $q_\ast\mathbb{P}$ is not atomless. Then by Lemma~\ref{atomic}, there is a finite forward-invariant set $P \subset \mathbb{S}^1$ such that $q_\ast\mathbb{P} \, = \, \frac{1}{|P|}\sum_{x \in P} \delta_x$. Fix an arbitrary $x \in P$. Let $E:=q^{-1}(\{x\}) \in \mathcal{F}_+$, and for each $t \in \mathbb{T}^+$ let
\[ \tilde{E}_t \ := \ \{ \omega \, : \, q(\theta^t\omega) = \varphi(t,\omega)x \}. \]
\noindent Once again, $\mathbb{P}(E \triangle \tilde{E}_t)=0$ for each $t$. For each $n \in \mathbb{N}$, for any $F \in \mathcal{F}_n$, we have that
\begin{align*}
\mathbb{P}(E \cap F) \ &= \ \int_F \mathbb{P}(E|\mathcal{F}_n)(\omega) \; \mathbb{P}(d\omega) \\
&= \ \int_F \mathbb{P}(\tilde{E}_n|\mathcal{F}_n)(\omega) \; \mathbb{P}(d\omega) \\
&= \ \int_F \mathbb{P}( \hspace{0.2mm} \tilde{\omega} \, : \, q(\theta^n\tilde{\omega}) = \varphi(n,\omega)x \hspace{0.2mm} ) \; \mathbb{P}(d\omega) \\
&\hspace{10mm} \textrm{since $\mathcal{F}_n$ and $\theta^{-n}\mathcal{F}_+$ are independent $\sigma$-algebras} \\
&= \ \int_F q_\ast\mathbb{P} \hspace{0.2mm} ( \hspace{0.2mm} \{\varphi(n,\omega)x\} \hspace{0.2mm} ) \; \mathbb{P}(d\omega) \\
&= \ \int_F \, \tfrac{1}{|P|} \; \mathbb{P}(d\omega) \\
&= \ \tfrac{1}{|P|} \mathbb{P}(F) \\
&= \ \mathbb{P}(E)\mathbb{P}(F).
\end{align*}
\noindent So $E$ is independent of $\mathcal{F}_n$ for each $n \in \mathbb{N}$, and therefore $E$ is independent of $\mathcal{F}_+$. In particular, $E$ is independent of itself, and so $\mathbb{P}(E)=1$. Hence $|P|=1$, i.e.~$q_\ast\mathbb{P}$ is a Dirac mass at a deterministic fixed point.
\end{proof}

\begin{cor} \label{a or b}
If $\mathbb{P}(\Omega_c)=1$ then either case~(a) or case~(b) in the statement of Theorem~\ref{cr char} holds.
\end{cor}

\begin{proof}
Fix an arbitrary $k \in \mathbb{S}^1$, and define the function $q:\Omega \to \mathbb{S}^1$ by
\[ q(\omega) \ = \ \left\{ \begin{array}{c l} \tilde{r}(\omega) & \omega \in \Omega_c \\ k & \omega \in \Omega \setminus \Omega_c. \end{array} \right. \]
\noindent By Lemma~\ref{crrfp}, $q$ is an $\mathcal{F}_+$-measurable function. If $\mathbb{P}(\Omega_c)=1$ then by Lemma~\ref{crrfp}, for each $t \in \mathbb{T}^+$, for $\mathbb{P}$-almost all $\omega \in \Omega$, $q(\theta^t\omega)=\varphi(t,\omega)q(\omega)$. Hence Lemma~\ref{Dirrfp} gives the desired result.
\end{proof}

\begin{lemma} \label{atomless}
If $\varphi$ is stably synchronising then $\varphi$ admits at least one atomless reverse-stationary probability measure.
\end{lemma}

\begin{proof}
Suppose $\varphi$ is stably synchronising. First suppose that $\varphi$ does \emph{not} have a deterministic fixed point. We know that there exists at least one probability measure $\rho$ that is ergodic with respect to $(\bar{\varphi}_x^t)$; by Corollary~\ref{Dirac}, such a probability measure must be atomless. So now suppose that $\varphi$ \emph{does} have a deterministic fixed point $p$. Let $p' \in \mathbb{R}$ be a lift of $p$, and for each $v \in [0,1]$, let $J_v:=\pi([p',p'+v])$. Define the function $h:\Omega \to [0,1]$ by
\[ h(\omega) \ = \ \sup\{ v \in [0,1) \, : \, l(\varphi(t,\omega)J_v) \to 0 \textrm{ as } t \to \infty \}. \]
\noindent For any $c \in [0,1)$ and $\omega \in \Omega$, $h(\omega) > c$ if and only if there exists $v \in (c,1) \cap \mathbb{Q}$ such that $l(\varphi(t,\omega)J_v) \to 0$ as $t \to \infty$. Hence $h$ is $\mathcal{F}_+$-measurable. Now we know that for $\mathbb{P}$-almost every $\omega \in \Omega$ there exists an open neighbourhood $U$ of $p$ such that $l(\varphi(t,\omega)U) \to 0$ as $t \to \infty$. Hence $h(\omega) \in (0,1)$ for $\mathbb{P}$-almost all $\omega \in \Omega$.
\\ \\
Now define the function $q:\Omega \to \mathbb{S}^1$ by
\[ q(\omega) \ = \ \pi(p' + h(\omega)). \]
\noindent Since $h$ is $\mathcal{F}_+$-measurable, $q$ is $\mathcal{F}_+$-measurable. Given any $t \in \mathbb{T}^+$ and $\omega \in \Omega$, we have that for all $v \in [0,1)$,
\[ l(\varphi(s,\omega)J_v) \to 0 \textrm{ as } s \to \infty \hspace{4mm} \Longleftrightarrow \hspace{4mm} l(\varphi(s,\theta^t\omega) \, (\varphi(t,\omega)J_v) \, ) \to 0 \textrm{ as } s \to \infty \]
\noindent and therefore $q(\theta^t\omega)=\varphi(t,\omega)q(\omega)$. Hence, by Lemma~\ref{Dirrfp}, $q_\ast\mathbb{P}$ is ergodic with respect to $(\bar{\varphi}_x^t)$. Moreover, since $h(\omega) \in (0,1)$ for $\mathbb{P}$-almost all $\omega \in \Omega$, $q_\ast\mathbb{P}$ is not equal to $\delta_p$. Since $\varphi$ is synchronising, $\varphi$ cannot have more than one deterministic fixed point, and therefore by Corollary~\ref{Dirac} (or the second statement in Lemma~\ref{Dirrfp}) $q_\ast\mathbb{P}$ must be atomless. 
\end{proof}

\begin{lemma} \label{mart}
Suppose we have an atomless\footnote{The condition that $\rho$ is atomless can in fact be dropped, although the proof then becomes significantly longer, as it is harder to justify that the martingale $(h_t)_{t \in \mathbb{T}^+}$ almost surely has right-continuous sample paths. In any case, we will not need this for our purposes.} reverse-stationary probability measure $\rho$. Then for any connected $J \subset \mathbb{S}^1$, for $\mathbb{P}$-almost all $\omega \in \Omega$, $\rho(\varphi(t,\omega)J)$ is convergent as $t \to \infty$.
\end{lemma}

\noindent The main idea of the proof is the same as in [LeJ87, Lemme~1].

\begin{proof}
Fix a connected $J \subset \mathbb{S}^1$, and for each $t$ and $\omega$ let $h_t(\omega)=\rho(\varphi(t,\omega)J)$. Note that for each boundary point $x$ of $J$, the map $t \mapsto \varphi(t,\omega)x$ is right-continuous for all $\omega$. Hence, since $\rho$ is atomless, the map $t \mapsto h_t(\omega)$ is right-continuous for all $\omega$. So if we can show that $(h_t)_{t \in \mathbb{T}^+}$ is an $(\mathcal{F}_t)_{t \in \mathbb{T}^+}$-adapted martingale, then the martingale convergence theorem will give the desired result. Fix any $s,t \in \mathbb{T}^+$. We have that
\begin{align*}
\mathbb{E}[h_{s+t}|\mathcal{F}_s](\omega) \ &= \ \mathbb{E}[\,\tilde{\omega} \mapsto \rho(\varphi(s+t,\tilde{\omega})J)\,|\,\mathcal{F}_s\,](\omega) \\
&= \ \mathbb{E}[\,\tilde{\omega} \mapsto \rho( \hspace{0.2mm} \varphi(t,\theta^s\tilde{\omega})(\varphi(s,\tilde{\omega})J) \hspace{0.2mm} )\,|\,\mathcal{F}_s\,](\omega) \\
&= \ \mathbb{E}[\,\tilde{\omega} \mapsto \rho( \hspace{0.2mm} \varphi(t,\theta^s\tilde{\omega})(\varphi(s,\omega)J) \hspace{0.2mm} )\,] \\
& \hspace{10mm} \textrm{since $\mathcal{F}_s$ and $\theta^{-s}\mathcal{F}_t$ are independent $\sigma$-algebras} \\
&= \ \rho(\varphi(s,\omega)J) \\
& \hspace{10mm} \textrm{since $\rho$ is reverse-stationary} \\
&= \ h_s(\omega).
\end{align*}
\noindent So we are done.
\end{proof}

\begin{lemma} \label{1atomless}
If $\varphi$ is stably synchronising then $\varphi$ admits at least one atomless reverse-stationary probability measure.
\end{lemma}

\begin{proof}
Suppose $\varphi$ is stably synchronising. First suppose that $\varphi$ does \emph{not} have a deterministic fixed point. We know that there exists at least one probability measure $\rho$ that is ergodic with respect to $(\bar{\varphi}_x^t)$; by Corollary~\ref{Dirac}, such a probability measure must be atomless. So now suppose that $\varphi$ \emph{does} have a deterministic fixed point $p$. Let $p' \in \mathbb{R}$ be a lift of $p$, and for each $v \in [0,1]$, let $J_v:=\pi([p',p'+v])$. Define the function $h:\Omega \to [0,1]$ by
\[ h(\omega) \ = \ \sup\{ v \in [0,1) \, : \, l(\varphi(t,\omega)J_v) \to 0 \textrm{ as } t \to \infty \}. \]
\noindent For any $c \in [0,1)$ and $\omega \in \Omega$, $h(\omega) > c$ if and only if there exists $v \in (c,1) \cap \mathbb{Q}$ such that $l(\varphi(t,\omega)J_v) \to 0$ as $t \to \infty$. Hence $h$ is $\mathcal{F}_+$-measurable. Now we know that for $\mathbb{P}$-almost every $\omega \in \Omega$ there exists an open neighbourhood $U$ of $p$ such that $l(\varphi(t,\omega)U) \to 0$ as $t \to \infty$. Hence $h(\omega) \in (0,1)$ for $\mathbb{P}$-almost all $\omega \in \Omega$.
\\ \\
Now define the function $q:\Omega \to \mathbb{S}^1$ by
\[ q(\omega) \ = \ \pi(p' + h(\omega)). \]
\noindent Since $h$ is $\mathcal{F}_+$-measurable, $q$ is $\mathcal{F}_+$-measurable. Given any $t \in \mathbb{T}^+$ and $\omega \in \Omega$, we have that for all $v \in [0,1)$,
\[ l(\varphi(s,\omega)J_v) \to 0 \textrm{ as } s \to \infty \hspace{4mm} \Longleftrightarrow \hspace{4mm} l(\varphi(s,\theta^t\omega) \, (\varphi(t,\omega)J_v) \, ) \to 0 \textrm{ as } s \to \infty \]
\noindent and therefore $q(\theta^t\omega)=\varphi(t,\omega)q(\omega)$. Hence, by Lemma~\ref{Dirrfp}, $q_\ast\mathbb{P}$ is ergodic with respect to $(\bar{\varphi}_x^t)$. Moreover, since $h(\omega) \in (0,1)$ for $\mathbb{P}$-almost all $\omega \in \Omega$, $q_\ast\mathbb{P}$ is not equal to $\delta_p$. Since $\varphi$ is synchronising, $\varphi$ cannot have more than one deterministic fixed point, and therefore by Corollary~\ref{Dirac} (or the second statement in Lemma~\ref{Dirrfp}) $q_\ast\mathbb{P}$ must be atomless. 
\end{proof}

\begin{lemma} \label{cr1}
Suppose $\varphi$ is synchronising and admits an atomless reverse-stationary probability measure $\rho$. Then $\mathbb{P}(\Omega_c)=1$.
\end{lemma}

\begin{proof}
Given any connected $J \subset \mathbb{S}^1$ with $0<l(J)<1$, writing $\partial J \! =: \! \{x,y\}$, for $\mathbb{P}$-almost all $\omega \in \Omega$ we have the following: $d(\varphi(t,\omega)x,\varphi(t,\omega)y) \to 0$ as $t \to \infty$ (since $\varphi$ is synchronising), and $\rho(\varphi(t,\omega)J)$ is convergent as $t \to \infty$ (by Lemma~\ref{mart}); and therefore (since $\rho$ is atomless) $l(\varphi(t,\omega)J)$ converges to either 0 or 1 as $t \to \infty$. Now fix an arbitrary $k \in \mathbb{R}$, and for each $v \in [0,1]$, let $J_v:=\pi([k,k+v])$. Let $\Omega' \subset \Omega$ be a $\mathbb{P}$-full set such that for each $\omega \in \Omega'$ and $v \in [0,1] \cap \mathbb{Q}$, $l(\varphi(t,\omega)J_v)$ converges to either 0 or 1 as $t \to \infty$. For each $\omega \in \Omega'$, let
\begin{align*}
c(\omega) \ :=& \ \sup \{ v \in [0,1] : l(\varphi(t,\omega)J_v) \to 0 \textrm{ as } t \to \infty \} \\
=& \ \hspace{1.2mm} \inf \{ v \in [0,1] : l(\varphi(t,\omega)J_v) \to 1 \textrm{ as } t \to \infty \}.
\end{align*}
\noindent It is clear that for each $\omega \in \Omega'$, $\pi(k+c(\omega))$ is a crack point of $\omega$. So we are done.
\end{proof}

\noindent Combining Lemmas~\ref{1atomless} and \ref{cr1} gives that if $\varphi$ is stably synchronising then $\mathbb{P}(\Omega_c)=1$.

\begin{lemma}
Suppose $\mathbb{P}(\Omega_c)=1$. Then $\varphi$ is stably synchronising if and only if case~(a) in the statement of Theorem~\ref{cr char} holds.
\end{lemma}

\begin{proof}
For any $x,y \in \mathbb{S}^1$ and $\omega \in \Omega_c$, if $\tilde{r}(\omega) \neq x$ and $\tilde{r}(\omega) \neq y$ then $x \sim_\omega y$. Hence it is clear that in case~(a) in the statement of Theorem~\ref{cr char}, $\varphi$ is synchronising. For any $x \in \mathbb{S}^1$ and $\omega \in \Omega_c$, if $\tilde{r}(\omega) \neq x$ then there obviously exists a neighbourhood $U$ of $x$ such that $\mathrm{diam}(\varphi(t,\omega)U) \to 0$ as $t \to \infty$. Hence, in case~(a) in the statement of Theorem~\ref{cr char}, $\varphi$ is everywhere locally stable. Thus, in case~(a) in the statement of Theorem~\ref{cr char}, $\varphi$ is stably synchronising. Now if there exists $p \in \mathbb{S}^1$ such that $\mathbb{P}(\omega \in \Omega_c \, : \, \tilde{r}(\omega) = p) \, > \, 0$, then $\varphi$ is clearly not everywhere locally stable (since the point $p$ serves as a counterexample), and so $\varphi$ is not stably synchronising.
\end{proof}

\noindent Now we will say that a probability measure $\rho$ on $\mathbb{S}^1$ is \emph{forward-stationary} if for all $t \in \mathbb{T}^+$ and $A \in \mathcal{B}(\mathbb{S}^1)$,
\[ \rho(A) \ = \ \int_\Omega \rho(\varphi(t,\omega)^{-1}(A)) \, \mathbb{P}(d\omega). \]
Again, by the Krylov-Bogolyubov theorem, there is at least one forward-stationary probability measure.

\begin{lemma} \label{rep}
Let $q:\Omega \to \mathbb{S}^1$ be as in Lemma~\ref{Dirrfp}, and suppose that $q_\ast\mathbb{P}$ is atomless. Let $\rho$ be any forward-stationary probability measure. Then for $\mathbb{P}$-almost every $\omega \in \Omega$, $\rho(x \in \mathbb{S}^1 : x \sim_\omega q(\omega))=0$.
\end{lemma}

\begin{proof}
Define the functions
\begin{align*}
\Theta \, &: \, \Omega \times \mathbb{S}^1 \, \to \, \Omega \times \mathbb{S}^1 \\
\Theta_2 \, &: \, \Omega \times \mathbb{S}^1 \times \mathbb{S}^1 \, \to \, \Omega \times \mathbb{S}^1 \times \mathbb{S}^1
\end{align*}
\noindent \vspace{-3mm} by
\begin{align*}
\Theta(\omega,x) \ &= \ (\theta^1\omega,\varphi(1,\omega)x) \\
\Theta_2(\omega,x,y) \ &= \  (\theta^1\omega,\varphi(1,\omega)x,\varphi(1,\omega)y).
\end{align*}
\noindent It is not hard to show (e.g.~as in [New15, Theorem~143(i)] or [Kif86, Lemma~1.2.3]) that $\Theta$ is a measure-preserving transformation of $\left(\Omega \times \mathbb{S}^1 \, , \, \mathcal{F}_+ \otimes \mathcal{B}(\mathbb{S}^1) \, , \, \mathbb{P}|_{\mathcal{F}_+} \otimes \rho\right)$. Now define the probability measure $\mathfrak{p}$ on the measurable space $\left(\Omega \times \mathbb{S}^1 \times \mathbb{S}^1 \, , \, \mathcal{F}_+ \otimes \mathcal{B}(\mathbb{S}^1 \times \mathbb{S}^1)\right)$ by
\[ \mathfrak{p}(A) \ := \ \mathbb{P} \otimes \rho( \, (\omega,x) \in \Omega \times \mathbb{S}^1 \, : \, (\omega,x,q(\omega)) \in A \, ). \]
\noindent For any $A \in \mathcal{F}_+ \otimes \mathcal{B}(\mathbb{S}^1 \times \mathbb{S}^1)$, since $q$ is $\mathcal{F}_+$-measurable, the set $\{(\omega,x) : (\omega,x,q(\omega)) \in A\}$ is $(\mathcal{F}_+ \otimes \mathcal{B}(\mathbb{S}^1))$-measurable. With this, we have
\begin{align*}
\mathfrak{p}(\Theta_2^{-1}(A)) \ &= \ \mathbb{P} \otimes \rho( \, (\omega,x) \in \Omega \times \mathbb{S}^1 \, : \, (\theta^1\omega,\varphi(1,\omega)x,\varphi(1,\omega)q(\omega)) \in A \, ) \\
&= \ \mathbb{P} \otimes \rho( \, (\omega,x) \in \Omega \times \mathbb{S}^1 \, : \, (\theta^1\omega,\varphi(1,\omega)x,q(\theta^1\omega)) \in A \, ) \\
&= \ \mathbb{P} \otimes \rho( \, \Theta^{-1\!}\{(\omega,x) \in \Omega \times \mathbb{S}^1 \, : \, (\omega,x,q(\omega)) \in A\} \, ) \\
&= \ \mathbb{P} \otimes \rho( \, (\omega,x) \in \Omega \times \mathbb{S}^1 \, : \, (\omega,x,q(\omega)) \in A \, ) \\
&= \ \mathfrak{p}(A).
\end{align*}
\noindent So $\mathfrak{p}$ is $\Theta_2$-invariant. Now let $\Delta:=\{(x,x):x \in \mathbb{S}^1\}$. Since $q_\ast\mathbb{P}$ is atomless, we have that
\begin{align*}
\mathfrak{p}(\Omega \times \Delta) \ &= \ \mathbb{P} \otimes \rho( \, (\omega,x) \in \Omega \times \mathbb{S}^1 \, : \, q(\omega)=x \, ) \\
&= \ \int_{\mathbb{S}^1} \mathbb{P}(\omega \in \Omega : q(\omega)=x) \, \rho(dx) \\
&= \ 0.
\end{align*}
\noindent So letting $U_\varepsilon := \{(x,y) \in \mathbb{S}^1 \times \mathbb{S}^1 : d(x,y) < \varepsilon\}$ for each $\varepsilon>0$, we have that $\mathfrak{p}(\Omega \times U_\varepsilon) \to 0$ as $\varepsilon \to 0$. Hence the set
\begin{align*}
K \ :=& \ \{ \, (\omega,x,y) \in \Omega \times \mathbb{S}^1 \times \mathbb{S}^1 \, : \, d(\varphi(n,\omega)x,\varphi(n,\omega)y) \to 0 \textrm{ as } n \to \infty \, \} \\
=& \ \bigcap_{n=1}^\infty \, \bigcup_{i=1}^\infty \, \bigcap_{j=i}^\infty \; \Theta_2^{-j}(U_{\frac{1}{n}})
\end{align*}
\noindent is a $\mathfrak{p}$-null set. Hence the set
\[ L \ := \ \{ \, (\omega,x) \in \Omega \times \mathbb{S}^1 \, : \, d(\varphi(n,\omega)x,\varphi(n,\omega)q(\omega)) \to 0 \textrm{ as } n \to \infty \, \} \]
\noindent is a $(\mathbb{P} \otimes \rho)$-null set. So (with Fubini's theorem) we are done.
\end{proof}

\begin{cor}
If $\varphi$ is stably synchronising then for $\mathbb{P}$-almost all $\omega \in \Omega_c$, $\tilde{r}(\omega)$ is a repulsive crack point of $\omega$.
\end{cor}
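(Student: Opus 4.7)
The plan is to deduce the corollary by combining Corollary 4.15 with Lemma 4.16, taking the function $q$ appearing in Lemma 4.16 to be $\tilde{r}$, suitably extended off $\Omega\setminus\Omega_c$. Since $\varphi$ is stably synchronising, Corollary 4.15 gives $\mathbb{P}(\Omega_c)=1$ together with case~(a) of Theorem~4.5: for every $x\in\mathbb{S}^1$, $\mathbb{P}(\omega\in\Omega_c:\tilde{r}(\omega)=x)=0$. In particular the pushforward $\tilde{r}_*(\mathbb{P}|_{\Omega_c})$ is atomless.

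First I would extend $\tilde{r}$ to all of $\Omega$ just as in the proof of Corollary~4.11: pick an arbitrary $k\in\mathbb{S}^1$ and set $q(\omega):=\tilde{r}(\omega)$ for $\omega\in\Omega_c$ and $q(\omega):=k$ otherwise. By Lemma~4.3, $q$ is $\mathcal{F}_+$-measurable and, because $\mathbb{P}(\Omega_c)=1$, satisfies the equivariance relation $\varphi(t,\omega)q(\omega)=q(\theta^t\omega)$ for $\mathbb{P}$-almost every $\omega$ and every $t\in\mathbb{T}^+$, so $q$ meets the hypotheses of Lemma~4.10. Moreover, since $\mathbb{P}(\Omega\setminus\Omega_c)=0$, the pushforward $q_*\mathbb{P}$ agrees with $\tilde{r}_*(\mathbb{P}|_{\Omega_c})$ and is therefore atomless. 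Next I would invoke the existence of a forward-stationary probability measure $\rho$ (furnished by the Krylov--Bogolyubov argument mentioned after Definition~4.4) and apply Lemma~4.16 to $q$ and $\rho$, obtaining
\[ \rho\bigl(\{x \in \mathbb{S}^1 : x \sim_\omega \tilde{r}(\omega)\}\bigr) \;=\; 0 \qquad \text{for $\mathbb{P}$-almost every } \omega \in \Omega_c. \]

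The final step is to convert this into the desired conclusion via the dichotomy recorded immediately after Definition~4.2: for each $\omega\in\Omega_c$, the $\sim_\omega$-equivalence class of $\tilde{r}(\omega)$ is either the singleton $\{\tilde{r}(\omega)\}$ (the repulsive case) or all of $\mathbb{S}^1$. In the latter alternative that class has $\rho$-mass $1$, contradicting the display above; so the repulsive case must hold on a $\mathbb{P}$-full subset of $\Omega_c$. There is no substantive obstacle here: the entire argument is just a short chaining of previously established results, with the only mild care being the routine extension of $\tilde{r}$ off the null set $\Omega\setminus\Omega_c$ so that Lemma~4.10 (and hence Lemma~4.16) is applicable. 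Notably, no appeal to full support of $\rho$ is needed; the total mass being $1$ is already enough to rule out the non-repulsive alternative.
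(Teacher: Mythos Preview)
Your proposal is correct and follows essentially the same route as the paper's proof: the paper simply says ``Taking $q$ to be as in the proof of Corollary~4.11, the result follows immediately from Lemma~4.16 and the existence of a forward-stationary probability measure,'' and your write-up fills in precisely the details behind that sentence (using Corollary~4.15 to secure atomlessness of $q_\ast\mathbb{P}$, and the dichotomy after Definition~4.2 to finish).
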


\begin{proof}
Taking $q$ to be as in the proof of Corollary~\ref{a or b}, the result follows immediately from Lemma~\ref{rep} and the existence of a forward-stationary probability measure.
\end{proof}

\section{Invariant measures}

We will describe the set of invariant random probability measures for $\varphi$ in the case that $\mathbb{P}(\Omega_c)=1$ (and in particular, in the case that $\varphi$ is stably synchronising). We begin with motivation from the deterministic setting:
\\ \\
An \emph{autonomous flow of the circle} is a $\mathbb{T}^+$-indexed family $(f^t)_{t \in \mathbb{T}^+}$ of orientation-preserving homeomorphisms $f^t:\mathbb{S}^1 \to \mathbb{S}^1$ such that $f^{s+t}=f^t \circ f^s$ for all $s,t \in \mathbb{T}^+$ and $f^0$ is the identity function. Given an autonomous flow of the circle $(f^t)$, we will say that a point $p \in \mathbb{S}^1$ is a \emph{fixed point} if $f^t(p)=p$ for all $t \in \mathbb{T}^+$. Note that if $\Omega$ is a singleton $\{\omega\}$ then $(\varphi(t,\omega))_{t \in \mathbb{T}^+}$ is an autonomous flow of the circle.
\\ \\
Let $\mathcal{M}_1$ be the set of probability measures on $\mathbb{S}^1$. Now, heuristically, the map $x \mapsto \delta_x$ serves as a natural way of identifying points in $\mathbb{S}^1$ with measures on $\mathbb{S}^1$. On the basis of this identification, given an autonomous flow of the circle $(f^t)$, for each $t \in \mathbb{T}^+$ the map $\rho \mapsto f^t_\ast\rho$ on $\mathcal{M}_1$ serves as a natural way of ``lifting'' $f^t$ from $\mathbb{S}^1$ to $\mathcal{M}_1$, since $f^t_\ast\delta_x=\delta_{f^t(x)}$ for all $x \in \mathbb{S}^1$. In particular, a point $p \in \mathbb{S}^1$ is a fixed point of $(f^t)$ if and only if $\delta_p$ is an invariant measure (and in fact, an ergodic invariant measure) of $(f^t)$.
\\ \\
We will say that an autonomous flow of the circle $(f^t)$ is \emph{simple} if there exist distinct points $r,a \in \mathbb{S}^1$ (respectively called the \emph{repeller} and the \emph{attractor} of $(f^t)$) such that $r$ is a fixed point and for all $x \in \mathbb{S}^1 \setminus \{r\}$, $f^t(x) \to a$ as $t \to \infty$. (It follows that $a$ is also a fixed point.) In this case, it is easy to show that the set of invariant probability measures for $(f^t)$ is given by $\{\lambda \delta_r + (1-\lambda) \delta_a : \lambda \in [0,1]\}$. We will refer to the pair $(a,r)$ as a \emph{global attractor-repeller pair} for $(f^t)$; the basis for this terminology that ``$r$ repels all points in $\mathbb{S}^1$ (other than itself) towards $a$''.
\\ \\
If $\Omega$ is a singleton $\{\omega\}$, the flow $(\varphi(t,\omega))_{t \in \mathbb{T}^+}$ is simple if and only if $\omega$ admits a repulsive crack point $r$, in which case $r$ is the repeller of $(\varphi(t,\omega))_{t \in \mathbb{T}^+}$. Obviously, in this case, $\varphi$ is not synchronising, since the repeller and attractor of $(\varphi(t,\omega))_{t \in \mathbb{T}^+}$ are distinct deterministic fixed points of $\varphi$. (Note that it is impossible for $\varphi$ to be stably synchronising when $\Omega$ is a singleton.)
\\ \\
We now go on to extend the notion of ``simplicity'' to the random setting, and show that if $\varphi$ is stably synchronising then $\varphi$ is simple. We will need a couple of assumptions.
\\ \\
\textbf{Assumption A:} For all $t \in \mathbb{T}^+$, $\theta^t$ is a measurable automorphism of the measurable space $(\Omega,\mathcal{F})$.
\\ \\
The heuristic interpretation of this assumption is that the underlying noise process has been going on since eternity past. We refer to $\mathcal{F}_+$ as the \emph{future $\sigma$-algebra}, and we define the \emph{past $\sigma$-algebra} by $\mathcal{F}_- \, := \, \sigma(\theta^t\mathcal{F}_t:t \in \mathbb{T}^+) \subset \mathcal{F}$. It is not hard to show that $\mathcal{F}_+$ and $\mathcal{F}_-$ are independent (according to $\mathbb{P}$).
\\ \\
Let $\tilde{\mathcal{F}}:=\sigma(\theta^t\mathcal{F}_s:s,t \in \mathbb{T}^+)$. Note that $\mathcal{F}_+$ and $\mathcal{F}_-$ are both sub-$\sigma$-algebras of $\tilde{\mathcal{F}}$, and that for all $t \in \mathbb{T}^+$, $\theta^t$ is a measurable automorphism of $(\Omega,\tilde{\mathcal{F}})$. Moreover, we have the following:\footnote{Lemma~5.1 is a generalisation of some of the content of the section ``Invariant and Tail $\sigma$-Algebra'' on p547 of [Arn98].}

\begin{lemma}
For all $t \in \mathbb{T}^+ \setminus \{0\}$, $\theta^t$ is an ergodic measure-preserving transformation of the probability space $(\Omega,\tilde{\mathcal{F}},\mathbb{P}|_{\tilde{\mathcal{F}}})$.
\end{lemma}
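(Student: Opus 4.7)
My plan is to separate the argument into two stages: first confirm that $\theta^t$ is a well-defined measure-preserving transformation of $(\Omega,\tilde{\mathcal{F}})$, and then lift ergodicity from $\mathcal{F}_+$ (Lemma~4.6) to $\tilde{\mathcal{F}}$ by proving that every $\theta^t$-invariant element of $\tilde{\mathcal{F}}$ agrees $\mathbb{P}$-almost surely with some element of $\mathcal{F}_+$. For the first stage, fix $t \in \mathbb{T}^+$ and compute $(\theta^t)^{-1}$ on a generator $\theta^s\mathcal{F}_r$: writing a set in $\theta^s\mathcal{F}_r$ as $\{\omega : \theta^{-s}\omega \in F\}$ with $F \in \mathcal{F}_r$ (using Assumption~A to invert $\theta^s$), the cocycle $\theta^a \circ \theta^b = \theta^{a+b}$ yields $(\theta^t)^{-1}\{\omega : \theta^{-s}\omega \in F\} = \{\omega : \theta^{t-s}\omega \in F\}$. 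If $t \geq s$, this is the $\theta^{t-s}$-preimage of $F$ and by condition~(i) sits in $\mathcal{F}_{r+t-s} \subset \tilde{\mathcal{F}}$; if $t < s$, it is the forward image $\theta^{s-t}(F) \in \theta^{s-t}\mathcal{F}_r \subset \tilde{\mathcal{F}}$. Measure preservation is inherited from $\theta^t$ preserving $\mathbb{P}$ on all of $\mathcal{F}$.

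For ergodicity, fix $t>0$ and $E \in \tilde{\mathcal{F}}$ with $(\theta^t)^{-1}(E) = E$ modulo $\mathbb{P}$-null sets, and introduce the filtration $\mathcal{A}_N := \theta^N\mathcal{F}_{2N}$, heuristically the events depending only on noise between times $-N$ and $N$. A brief check (given $\theta^N(F) \in \mathcal{A}_N$ with $F \in \mathcal{F}_{2N}$, rewrite $F = \theta^1((\theta^1)^{-1}(F))$ where by condition~(i) $(\theta^1)^{-1}(F) \in \mathcal{F}_{2N+1}$) shows $\mathcal{A}_N \subset \mathcal{A}_{N+1}$, and a similar check shows every generator $\theta^s\mathcal{F}_r$ of $\tilde{\mathcal{F}}$ lies in $\mathcal{A}_N$ once $N \geq \max(s,r-s)$. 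So $\bigcup_N \mathcal{A}_N$ is an algebra generating $\tilde{\mathcal{F}}$, and standard approximation yields, for each $\varepsilon > 0$, some $N$ and some $E_N = \{\omega : \theta^{-N}\omega \in B_N\}$ with $B_N \in \mathcal{F}_{2N}$ and $\mathbb{P}(E \triangle E_N) < \varepsilon$.

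The key move is to shift $E_N$ forward by a large multiple of $t$: repeating the cocycle calculation, $(\theta^{nt})^{-1}(E_N) = \{\omega : \theta^{nt-N}\omega \in B_N\}$ is, for $nt \geq N$, the $\theta^{nt-N}$-preimage of $B_N$ and so lies in $\mathcal{F}_{nt+N} \subset \mathcal{F}_+$ by condition~(i). Combining this with the iterated invariance $(\theta^{nt})^{-1}(E) = E$ and measure preservation gives $\mathbb{P}(E \triangle (\theta^{nt})^{-1}(E_N)) = \mathbb{P}(E \triangle E_N) < \varepsilon$, so $E$ is $\mathbb{P}$-approximable to arbitrary precision by events in $\mathcal{F}_+$. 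Hence there exists $E^* \in \mathcal{F}_+$ with $\mathbb{P}(E \triangle E^*) = 0$; condition~(i) gives $(\theta^t)^{-1}\mathcal{F}_+ \subset \mathcal{F}_+$, so $E^*$ is $\theta^t$-invariant modulo null sets \emph{within} $\mathcal{F}_+$, and Lemma~4.6 forces $\mathbb{P}(E^*) \in \{0,1\}$, whence $\mathbb{P}(E) \in \{0,1\}$.

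The main obstacle I anticipate is purely notational: the symbol $\theta^s\mathcal{F}_r$ combines the paper's convention that $\theta^{-t}$ denotes the preimage operator with the Assumption~A fact that $\theta^t$ is genuinely invertible, and the cocycle identity $\theta^a\circ\theta^b = \theta^{a+b}$ must be tracked carefully for both positive and negative exponents to keep preimage/image bookkeeping consistent. Beyond that, the structural content---that any event in the ``full'' $\sigma$-algebra becomes future-measurable after a sufficiently large forward time-shift---is the familiar idea behind natural extensions of shifts, and it reduces the ergodicity question on $\tilde{\mathcal{F}}$ cleanly to the already established ergodicity on $\mathcal{F}_+$.
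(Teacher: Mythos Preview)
Your argument is correct in outline but follows a genuinely different route from the paper's. The paper does not reduce to Lemma~4.6; it works directly on $\tilde{\mathcal{F}}$ via a martingale argument. Setting $\mathcal{G}_n := \theta^{nt}\mathcal{F}_+$ (an increasing filtration with $\sigma(\bigcup_n \mathcal{G}_n) = \tilde{\mathcal{F}}$), it observes that if $g$ is a version of $\mathbb{P}(E\mid\mathcal{F}_+)$ then $g\circ\theta^{nt}$ is a version of $\mathbb{P}(E\mid\mathcal{G}_{-n})$; by Kolmogorov's 0--1 law $\bigcap_n \mathcal{G}_n$ is $\mathbb{P}$-trivial, so L\'{e}vy's downward theorem gives $g\circ\theta^{nt}\to\mathbb{P}(E)$ a.s., and $\theta^t$-invariance of $\mathbb{P}$ then forces $g\equiv\mathbb{P}(E)$ a.s.\ for each $n$. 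Thus $E$ is independent of every $\mathcal{G}_n$, hence of $\tilde{\mathcal{F}}$, hence of itself. Your approximation-and-shift argument is more elementary (no martingale convergence) and isolates the clean structural fact that the $\theta^t$-invariant $\sigma$-algebra of $\tilde{\mathcal{F}}$ already sits, modulo null sets, inside $\mathcal{F}_+$.

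There is one point to tighten. Lemma~4.6 as stated asserts ergodicity of the \emph{flow} $(\theta^s)_{s\in\mathbb{T}^+}$ on $(\Omega,\mathcal{F}_+,\mathbb{P})$, i.e.\ triviality of sets invariant under \emph{every} $\theta^s$. You need the stronger statement that the single map $\theta^t$ is ergodic on $\mathcal{F}_+$, and in general flow-ergodicity does not imply time-$t$-map ergodicity. In this memoryless setting the stronger statement does hold, by the same tail-$\sigma$-algebra mechanism: for $E\in\mathcal{F}_+$ with $\theta^{-t}(E)=E$ mod null, one has $E=\theta^{-nt}(E)\in\theta^{-nt}\mathcal{F}_+$ mod null for every $n$, and $\theta^{-nt}\mathcal{F}_+$ is independent of $\mathcal{F}_{nt}$, so $E$ is independent of $\bigcup_n\mathcal{F}_{nt}$ and hence of $\mathcal{F}_+$. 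Replacing your appeal to Lemma~4.6 with this two-line observation closes the gap.
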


\begin{proof}
Fix $t \in \mathbb{T}^+ \setminus \{0\}$. For each $n \in \mathbb{Z}$, let $\mathcal{G}_n=\theta^{nt}\mathcal{F}_+$. (Note that $\mathcal{G}_n$ is increasing in $n$.) Let $\mathcal{G}_{-\infty}:=\bigcap_{n \in \mathbb{Z}} \mathcal{G}_n$, and observe that $\tilde{\mathcal{F}}=\sigma(\bigcup_{n \in \mathbb{Z}} \mathcal{G}_n)$. Let $E \in \tilde{\mathcal{F}}$ be a set with $\theta^{-t}(E)=E$, and let $g:\Omega \to [0,1]$ be a version of $\mathbb{P}(E|\mathcal{F}_+)$; so for every $n \in \mathbb{Z}$, $g \circ \theta^{nt}$ is a version of $\mathbb{P}(E|\mathcal{G}_{-n})$. By a version of the Kolmogorov 0-1 law (e.g.~[New15, Proposition~132]), $\mathcal{G}_{-\infty}$ is a $\mathbb{P}$-trivial $\sigma$-algebra, and so the constant map $\,\omega \mapsto \mathbb{P}(E)$ is a version of $\mathbb{P}(E|\mathcal{G}_{-\infty})$. Therefore, by L\'{e}vy's downward theorem ([Will91, Theorem~14.4]), $g \circ \theta^{nt}(\omega) \to \mathbb{P}(E)$ as $n \to \infty$ for $\mathbb{P}$-almost all $\omega \in \Omega$. But since $\theta^t$ is itself $\mathbb{P}$-preserving, it follows that for each $n \in \mathbb{Z}$, $g \circ \theta^{nt}(\omega) = \mathbb{P}(E)$ for $\mathbb{P}$-almost all $\omega \in \Omega$.  So the constant map $\,\omega \mapsto \mathbb{P}(E)$ is a version of $\mathbb{P}(E|\mathcal{G}_n)$ for each $n$, i.e.~$E$ is independent of $\mathcal{G}_n$ for each $n$. It follows that $E$ is independent of $\tilde{\mathcal{F}}$. In particular, $E$ is independent of itself, and so $\mathbb{P}(E) \in \{0,1\}$.
\end{proof}

\noindent Hence (by reducing our underlying probability space to $(\Omega,\tilde{\mathcal{F}},\mathbb{P}|_{\tilde{\mathcal{F}}})$ if necessary) we may add the following assumption without loss of generality:
\\ \\
\textbf{Assumption B:} $\mathbb{P}$ is an ergodic measure of the dynamical system $(\theta^t)_{t \in \mathbb{T}^+}$ on $(\Omega,\mathcal{F})$.
\\ \\
\textbf{Throughout Section~4, we will work with Assumptions~A and B.}
\\ \\
We equip $\mathcal{M}_1$ with the ``evaluation $\sigma$-algebra'' $\mathfrak{K}:=\sigma(\rho \mapsto \rho(A) : A \in \mathcal{B}(\mathbb{S}^1))$. So for any measurable space $(E,\mathcal{E})$, a function $f:E \to \mathcal{M}_1$ is measurable if and only if the map $\,\xi \mapsto f(\xi)(A)\,$ from $E$ to $[0,1]$ is measurable for all $A \in \mathcal{B}(\mathbb{S}^1)$. It is not hard to show that the map $x \mapsto \delta_x$ is a measurable embedding of $\mathbb{S}^1$ into $\mathcal{M}_1$---that is to say, a set $A \subset \mathbb{S}^1$ is $\mathcal{B}(\mathbb{S}^1)$-measurable if and only if the set $\{\delta_x : x \in A\}$ is $\mathfrak{K}$-measurable.\footnote{On the one hand, the map $x \mapsto \delta_x$ is obviously measurable, and so if $\{\delta_x : x \in A\}$ is measurable then $A$ is measurable. On the other hand, for any $A \in \mathcal{B}(\mathbb{S}^1)$, $\{\delta_x:x \in A\}$ is precisely the set of probability measures $\rho$ for which $\rho \otimes \rho(\{(x,x): x \in A\})=1$. Now the set $\{(x,x): x \in A\}=\Delta \cap (A \times A)$ is obviously measurable, and it is not hard to show that the map $\rho \mapsto \rho \otimes \rho$ is measurable (with respect to the respective evaluation $\sigma$-algebras); therefore $\{\delta_x:x \in A\}$ is measurable.}

\begin{defi}
A \emph{random probability measure} (\emph{on $\mathbb{S}^1$}) is an $\Omega$-indexed family $(\mu_\omega)_{\omega \in \Omega}$ of probability measures $\mu_\omega$ on $\mathbb{S}^1$ such that the map $\omega \mapsto \mu_\omega$ from $\Omega$ to $\mathcal{M}_1$ is measurable (i.e.~such that for each $A \in \mathcal{B}(\mathbb{S}^1)$, the map $\omega \mapsto \mu_\omega(A)$ from $\Omega$ to $[0,1]$ is measurable). We say that two random probability measures $(\mu_\omega)_{\omega \in \Omega}$ and $(\mu'_\omega)_{\omega \in \Omega}$ are \emph{equivalent} if $\mu_\omega=\mu'_\omega$ for $\mathbb{P}$-almost all $\omega \in \Omega$.
\end{defi}

\begin{defi}
We will say that a probability measure $\mu$ on the measurable space $(\Omega \times \mathbb{S}^1,\mathcal{F} \otimes \mathcal{B}(\mathbb{S}^1))$ is \emph{compatible} if $\mu(E \times \mathbb{S}^1)=\mathbb{P}(E)$ for all $E \in \mathcal{F}$. We write $\mathcal{M}_1^\mathbb{P}$ for the set of compatible probability measures.
\end{defi}

\noindent The \emph{disintegration theorem} (e.g.~[Crau02a, Proposition~3.6]) states that for every compatible probability measure $\mu$ there exists a random probability measure $(\mu_\omega)_{\omega \in \Omega}$ (unique up to equivalence) such that
\[ \mu(A) \ = \ \int_\Omega \mu_\omega(A_\omega) \, \mathbb{P}(d\omega) \]
\noindent for all $A \in \mathcal{F} \otimes \mathcal{B}(\mathbb{S}^1)$, where $A_\omega$ denotes the $\omega$-section of $A$. The random probability measure $(\mu_\omega)$ is called \emph{a (version of the) disintegration of $\mu$}, and we will refer to $\mu$ as \emph{the integrated form of $(\mu_\omega)$}.

\begin{lemma} \label{ms}
Let $\mu^1$ and $\mu^2$ be compatible probability measures, with $(\mu^1_\omega)$ and $(\mu^2_\omega)$ being disintegrations of $\mu^1$ and $\mu^2$ respectively. If $\mu^1$ and $\mu^2$ are mutually singular then for $\mathbb{P}$-almost every $\omega \in \Omega$, $\mu^1_\omega$ and $\mu^2_\omega$ are mutually singular.
\end{lemma}

\begin{proof}
Suppose we have a set $A \in \mathcal{F} \otimes \mathcal{B}(\mathbb{S}^1)$ such that $\mu^1(A)=1$ and $\mu^2(A)=0$. Then it is clear that for $\mathbb{P}$-almost all $\omega \in \Omega$, $\mu^1_\omega(A_\omega)=1$ and $\mu^2_\omega(A_\omega)=0$. So we are done.
\end{proof}

\begin{defi}
For any measurable function $q:\Omega \to \mathbb{S}^1$, we define the probability measure $\boldsymbol{\delta}_q$ on $(\Omega \times \mathbb{S}^1,\mathcal{F} \otimes \mathcal{B}(\mathbb{S}^1))$ by $\boldsymbol{\delta}_q(A)=\mathbb{P}(\omega \in \Omega : (\omega,q(\omega)) \in A)$.
\end{defi}

\noindent Note that $\boldsymbol{\delta}_q$ is a compatible probability measure, and admits the disintegration $(\delta_{q(\omega)})_{\omega \in \Omega}$.
\\ \\
We will say that two measurable functions $q,q':\Omega \to \mathbb{S}^1$ are \emph{equivalent} if $q(\omega)=q'(\omega)$ for $\mathbb{P}$-almost all $\omega \in \Omega$. Let $L^0(\mathbb{P},\mathbb{S}^1)$ denote the set of equivalence classes of measurable functions from $\Omega$ to $\mathbb{S}^1$; and for any measurable $q:\Omega \to \mathbb{S}^1$, let $\hat{q} \in L^0(\mathbb{P},\mathbb{S}^1)$ denote the equivalence class represented by $q$. For any sub-$\sigma$-algebra $\mathcal{G}$ of $\mathcal{F}$, we say that an element $\hat{q}$ of $ L^0(\mathbb{P},\mathbb{S}^1)$ is $\mathcal{G}$-measurable if it admits a representative $q:\Omega \to \mathbb{S}^1$ that is $\mathcal{G}$-measurable.
\\ \\
Heuristically, $L^0(\mathbb{P},\mathbb{S}^1)$ can be viewed as the set of ``random points in $\mathbb{S}^1$'', identified up to equivalence; and by the disintegration theorem, $\mathcal{M}_1^\mathbb{P}$ can be regarded as the set of random probability measures on $\mathbb{S}^1$, identified up to equivalence. Note that the map $\hat{q} \mapsto \boldsymbol{\delta}_q$ from $L^0(\mathbb{P},\mathbb{S}^1)$ to $\mathcal{M}_1^\mathbb{P}$ is well-defined and injective. Thus, heuristically, this map serves as a natural way of identifying random points in $\mathbb{S}^1$ with random measures on $\mathbb{S}^1$.
\\ \\
Now for each $t \in \mathbb{T}^+$, define the map $\Phi^t:L^0(\mathbb{P},\mathbb{S}^1) \to L^0(\mathbb{P},\mathbb{S}^1)$ by
\[ r \in \Phi^t(\hat{q}) \hspace{3mm} \Longleftrightarrow \hspace{3mm} r(\omega) \, = \, \varphi(t,\theta^{-t}\omega)q(\theta^{-t}\omega) \textrm{ for $\mathbb{P}$-almost all $\omega \in \Omega$}. \]
\noindent (It is easy to show that since $\mathbb{P}$ is $\theta^t$-invariant, this is indeed a well-defined map.) One can easily check that $\Phi^0$ is the identity function and $\Phi^{s+t}=\Phi^t \circ \Phi^s$ for all $s,t \in \mathbb{T}^+$. If $\Omega$ is a singleton $\{\omega\}$ then we may identify $L^0(\mathbb{P},\mathbb{S}^1)$ with $\mathbb{S}^1$ in the obvious manner, in which case $\Phi^t$ is simply equal to $\varphi(t,\omega)$ for all $t$.

\begin{defi}
An element $p$ of $L^0(\mathbb{P},\mathbb{S}^1)$ is called a \emph{random fixed point} (\emph{of $\varphi$}) if $\Phi^t(p)=p$ for all $t \in \mathbb{T}^+$. In this case, for convenience, we will also refer to any representative of $p$ as a random fixed point.
\end{defi}

\noindent Note that a measurable function $q:\Omega \to \mathbb{S}^1$ is a random fixed point if and only if for each $t \in \mathbb{T}^+$, for $\mathbb{P}$-almost all $\omega \in \Omega$, $\varphi(t,\omega)q(\omega)=q(\theta^t\omega)$. (This is precisely the property described in the statement of Lemma~\ref{Dirrfp}.)
\\ \\
We now describe how to ``lift'' the dynamics of $(\Phi^t)_{t \in \mathbb{T}^+}$ from $L^0(\mathbb{P},\mathbb{S}^1)$ onto $\mathcal{M}_1^\mathbb{P}$. For each $t \in \mathbb{T}^+$, define the map $\Theta^t:\Omega \times \mathbb{S}^1 \to \Omega \times \mathbb{S}^1$ by
\[ \Theta^t(\omega,x) \ = \ (\theta^t\omega,\varphi(t,\omega)x). \]

\noindent Note that $(\Theta^t)_{t \in \mathbb{T}^+}$ is a dynamical system on the measurable space $(\Omega \times \mathbb{S}^1,\mathcal{F} \otimes \mathcal{B}(\mathbb{S}^1))$---that is to say: $\Theta^0$ is the identity function; $\Theta^{s+t}=\Theta^t \circ \Theta^s$ for all $s,t \in \mathbb{T}^+$; and $\Theta^t$ is a measurable self-map of $\Omega \times \mathbb{S}^1$ for all $t \in \mathbb{T}^+$.

\begin{lemma} \label{lift}
For any compatible probability measure $\mu$ with disintegration $(\mu_\omega)$ and any $t \in \mathbb{T}^+$, $\Theta^t_\ast\mu$ is a compatible probability measure with disintegration $(\varphi(t,\theta^{-t}\omega)_\ast\mu_{\theta^{-t}\omega})$.
\end{lemma}

\noindent In particular, for any measurable $q:\Omega \to \mathbb{S}^1$ and any $t \in \mathbb{T}^+$, we have that $\Theta^t_\ast\boldsymbol{\delta}_q=\boldsymbol{\delta}_r$ where $r$ is a representative of $\Phi_t(\hat{q})$. Thus the semigroup $(\Theta^t_\ast)_{t \in \mathbb{T}^+}$ on $\mathcal{M}_1^\mathbb{P}$ serves as a natural ``lift'' of the semigroup $(\Phi^t)_{t \in \mathbb{T}^+}$ on $L^0(\mathbb{P},\mathbb{S}^1)$. For a proof of Lemma~\ref{lift}, see [Arn98, Lemma~1.4.4].

\begin{defi}
A probability measure $\mu$ on $(\Omega \times \mathbb{S}^1,\mathcal{F} \otimes \mathcal{B}(\mathbb{S}^1))$ is called an \emph{invariant (probability) measure of $\varphi$} if $\mu$ is compatible and is an invariant measure of the dynamical system $(\Theta^t)_{t \in \mathbb{T}^+}$. In this case, we will refer to any disintegration $(\mu_\omega)$ of $\mu$ as an \emph{invariant random probability measure of $\varphi$}.
\end{defi}

\noindent Note that a random probability measure $(\mu_\omega)$ is invariant if and only if for each $t \in \mathbb{T}^+$, for $\mathbb{P}$-almost all $\omega \in \Omega$, $\varphi(t,\omega)_\ast\mu_\omega=\mu_{\theta^t\omega}$.
\\ \\
Obviously, a measurable function $q:\Omega \to \mathbb{S}^1$ is a random fixed point if and only if $\boldsymbol{\delta}_q$ is an invariant measure. Moreover (as a consequence of Assumption~B) we have the following:

\begin{lemma} \label{rfperg}
For any random fixed point $q:\Omega \to \mathbb{S}^1$, $\boldsymbol{\delta}_q$ is ergodic with respect to $(\Theta^t)_{t \in \mathbb{T}^+}$.
\end{lemma}

\begin{proof}
Fix any $A \in \mathcal{F} \otimes \mathcal{B}(\mathbb{S}^1)$, and for each $t \in \mathbb{T}^+$, let
\[ E_t \ := \ \{ \omega \in \Omega : \Theta^t(\omega,q(\omega)) \in A \}. \]
\noindent Note that for every $t \in \mathbb{T}^+$, $\boldsymbol{\delta}_q(\Theta^{-t}(A) \triangle A)=\mathbb{P}(E_t \triangle E_0)$. Now since $q$ is a random fixed point, we have that for each $t \in \mathbb{T}^+$, $\mathbb{P}(E_t \triangle \theta^{-t}(E_0))=0$.
\\ \\
So, if $\boldsymbol{\delta}_q(\Theta^{-t}(A) \triangle A)=0$ for all $t \in \mathbb{T}^+$, then $\mathbb{P}(\theta^{-t}(E_0) \triangle E_0)=0$ for all $t \in \mathbb{T}^+$, so $\mathbb{P}(E_0) \in \{0,1\}$ (since $\mathbb{P}$ is $(\theta^t)$-ergodic), so $\boldsymbol{\delta}_q(A) \in \{0,1\}$. Thus $\boldsymbol{\delta}_q$ is $(\Theta^t)$-ergodic.
\end{proof}

\noindent We will now describe the set of invariant measures of $\varphi$ when $\mathbb{P}(\Omega_c)=1$.

\begin{thm} \label{simple}
Suppose $\mathbb{P}(\Omega_c)=1$, and let $r:\Omega \to \mathbb{S}^1$ be a measurable function with $r(\omega)=\tilde{r}(\omega)$ for all $\omega \in \Omega_c$. (By Lemma~\ref{crrfp}, $\hat{r}$ is an $\mathcal{F}_+$-measurable random fixed point.) Either:
\begin{enumerate}[\indent (A)]
\item $\boldsymbol{\delta}_r$ is the only $\varphi$-invariant probability measure; or
\item there exists an $\mathcal{F}_-$-measurable random fixed point $a:\Omega \to \mathbb{S}^1$, with $a(\omega) \neq r(\omega)$ for $\mathbb{P}$-almost every $\omega \in \Omega$, such that the set of invariant measures of $\varphi$ is given by $\{\lambda \boldsymbol{\delta}_r + (1-\lambda) \boldsymbol{\delta}_a : \lambda \in [0,1]\}$.
\end{enumerate}
\end{thm}

\noindent Obviously, in case~(B), we have that for $\mathbb{P}$-almost every $\omega \in \Omega_c$, for all $x \in \mathbb{S}^1 \setminus \{r(\omega)\}$, $x \sim_\omega a(\omega)$.

\begin{defi}
If $\mathbb{P}(\Omega_c)=1$ and case~(B) of Theorem~\ref{simple} holds, then we will say that $\varphi$ is \emph{simple}. In this case, letting $r$ and $a$ be as in Theorem~\ref{simple}, we refer to the pair $(\hat{a},\hat{r})$ as the \emph{global random attractor-repeller pair} of $\varphi$.
\end{defi}

\noindent Before proving Theorem~\ref{simple}, it will be useful to introduce the following definition (taken from [KN04]):

\begin{defi}
The \emph{spread} $D(\rho)$ of a probability measure $\rho$ on $\mathbb{S}^1$ is defined as
\[ D(\rho) \ := \ \inf \{ v > 0 \, : \, \exists \, \textrm{closed connected } J \subset \mathbb{S}^1 \textrm{ with } l(J) < v \textrm{ and } \rho(J)>1-v \}. \]
\end{defi}

\noindent It is not hard to show (by considering connected sets with rational endpoints) that $D(\cdot)$ serves as a measurable map from $\mathcal{M}_1$ to $[0,\frac{1}{2}]$, and that $D(\rho)=0$ if and only if $\rho$ is a Dirac mass.

\begin{proof}[Proof of Theorem~\ref{simple}]
Suppose $\boldsymbol{\delta}_r$ is not the only invariant measure, and let $\mu$ be an invariant measure distinct from $\boldsymbol{\delta}_r$. Let $\mu^a$ and $\mu^s$ denote respectively the absolutely continuous and singular parts of the Radon-Nikodym decomposition of $\mu$ with respect to $\boldsymbol{\delta}_r$; note that $\mu^a$ and $\mu^s$ are themselves invariant under the dynamical system $(\Theta^t)_{t \in \mathbb{T}^+}$. By Lemma~\ref{rfperg}, $\boldsymbol{\delta}_r$ is ergodic with respect to $(\Theta^t)$ and therefore $\mu^a$ must be a scalar multiple of $\boldsymbol{\delta}_r$. Hence the probability measure $\nu$ on $\Omega \times \mathbb{S}^1$ given by $\nu(A)=\frac{1}{\mu^s(\Omega \times \mathbb{S}^1)}\mu^s(A)$ is compatible, and is therefore an invariant measure of $\varphi$. Let $(\nu_\omega)_{\omega \in \Omega}$ be a disintegration of $\nu$. By Lemma~\ref{ms}, $\nu_\omega(\{r(\omega)\})=0$ for $\mathbb{P}$-almost all $\omega \in \Omega$. Hence $D(\varphi(t,\omega)_\ast\nu_\omega) \to 0$ as $t \to \infty$ for $\mathbb{P}$-almost every $\omega \in \Omega$. Since $\nu$ is invariant, this implies that for $\mathbb{P}$-almost all $\omega \in \Omega$, $D(\nu_{\theta^n\omega}) \to 0$ as $n \to \infty$ (in the integers). But since $\mathbb{P}$ is $\theta^1$-invariant, it follows that $D(\nu_\omega)=0$ for $\mathbb{P}$-almost all $\omega \in \Omega$, i.e.~$\nu_\omega$ is a Dirac mass for $\mathbb{P}$-almost all $\omega \in \Omega$. So there exists a measurable function $\tilde{a}:\Omega \to \mathbb{S}^1$ such that $\nu_\omega=\delta_{\tilde{a}(\omega)}$ for $\mathbb{P}$-almost all $\omega \in \Omega$ (and so $\nu=\boldsymbol{\delta}_{\tilde{a}}$). Since $\nu$ is $\varphi$-invariant, it follows that $\tilde{a}$ is a random fixed point.
\\ \\
So far, we have seen that any invariant measure $\mu$ is a convex combination of $\boldsymbol{\delta}_r$ and $\boldsymbol{\delta}_{\tilde{a}}$ for some random fixed point $\tilde{a}:\Omega \to \mathbb{S}^1$ such that $\tilde{a}(\omega) \neq r(\omega)$ for $\mathbb{P}$-almost all $\omega \in \Omega$. We next show that up to equivalence, there is \emph{only one} random fixed point that is $\mathbb{P}$-almost everywhere distinct from $r$. Let $a,b:\Omega \to \mathbb{S}^1$ be two random fixed points that are $\mathbb{P}$-almost everywhere distinct from $r$. It is clear that for $\mathbb{P}$-almost every $\omega \in \Omega$, $d(b(\theta^n\omega),a(\theta^n\omega)) \to 0$ as $n \to \infty$ (in the integers). But since $\mathbb{P}$ is $\theta^1$-invariant, it follows that $d(b(\omega),a(\omega))=0$, i.e.~$b(\omega)=a(\omega)$, for $\mathbb{P}$-almost all $\omega \in \Omega$.
\\ \\
So then, the set of all invariant measures takes the form $\{\lambda \boldsymbol{\delta}_r + (1-\lambda) \boldsymbol{\delta}_{\tilde{a}} : \lambda \in [0,1]\}$ for some random fixed point $\tilde{a}:\Omega \to \mathbb{S}^1$ that is $\mathbb{P}$-almost everywhere distinct from $r$. It remains to show that one can modify $\tilde{a}$ on a $\mathbb{P}$-null set to obtain an $\mathcal{F}_-$-measurable function. Fix any point $y \in \mathbb{S}^1$ such that $\mathbb{P}_\ast r(\{y\})=0$. For $\mathbb{P}$-almost every $\omega \in \Omega$, $d(\varphi(n,\omega)y,\tilde{a}(\theta^n\omega)) \to 0$ as $n \to \infty$ (in the integers). Since almost sure convergence implies convergence in probability, it follows that the random variable $\,\omega \mapsto d(\varphi(n,\omega)y,\tilde{a}(\theta^n\omega))\,$ converges in probability to 0 as $n \to \infty$. But since $\mathbb{P}$ is $\theta^n$-invariant for all $n$, this is the same as saying that the random variable $\omega \mapsto \varphi(n,\theta^{-n}\omega)y$ converges in probability to $\tilde{a}$ as $n \to \infty$. Hence in particular, there exists an unbounded increasing sequence $(m_n)_{n \in \mathbb{N}}$ in $\mathbb{N}$ such that for $\mathbb{P}$-almost all $\omega \in \Omega$, $\varphi(m_n,\theta^{-m_n}\omega)y \to \tilde{a}(\omega)$ as $n \to \infty$. So, fixing an arbitrary $k \in \mathbb{S}^1$, the function $a:\Omega \to \mathbb{S}^1$ given by
\[ a(\omega) \ = \ \left\{ \begin{array}{c l} \underset{n \to \infty}{\lim} \; \varphi(m_n,\theta^{-m_n}\omega)y & \textrm{if this limit exists} \\ k & \textrm{otherwise} \end{array} \right. \]
\noindent is $\mathcal{F}_-$-measurable and agrees with $\tilde{a}$ $\mathbb{P}$-almost everywhere. So we are done.
\end{proof}

\noindent Now by Theorem~\ref{cr char}, $\varphi$ is stably synchronising if and only if $\mathbb{P}(\Omega_c)=1$ and $\mathbb{P}_\ast r$ is atomless.

\begin{thm} \label{ssis}
If $\varphi$ is stably synchronising then $\varphi$ is simple.
\end{thm}

\noindent To prove Theorem~\ref{ssis}, we introduce the following:

\begin{defi}
We will say that a compatible probability measure $\mu$ is \emph{past-measurable} if there is a version $(\mu_\omega)$ of the disintegration of $\mu$ such that the map $\omega \mapsto \mu_\omega$ is $(\mathcal{F}_-,\mathfrak{K})$-measurable.
\end{defi}

\begin{lemma} \label{ks}
For every forward-stationary probability measure $\rho$, there exists a past-measurable invariant measure $\mu^\rho$ such that $\rho(A)=\mu^\rho(\Omega \times A)$ for all $A \in \mathcal{B}(\mathbb{S}^1)$.
\end{lemma}

\noindent For a proof (detailing the explicit construction of $\mu^\rho$), see [KS12, Theorem~4.2.9(ii)].

\begin{proof}[Proof of Theorem \ref{ssis}]
Suppose $\mathbb{P}(\Omega_c)=1$ and $\varphi$ is not simple. We know that there exists a forward-stationary probability measure $\rho$. Since $\boldsymbol{\delta}_r$ is the \emph{only} invariant measure of $\varphi$, Lemma~\ref{ks} then implies that $\boldsymbol{\delta}_r$ is past-measurable. So $r$ has an $\mathcal{F}_-$-measurable modification. But $r$ also obviously has an $\mathcal{F}_+$-measurable modification. Since $\mathcal{F}_-$ and $\mathcal{F}_+$ are independent, it follows that $\mathbb{P}_\ast r$ is a Dirac mass, so $\varphi$ is not stably synchronising.
\end{proof}

\section{Contractibility and compressibility}

\begin{defi}
We say that $\varphi$ is \emph{contractible} if for any distinct $x,y \in \mathbb{S}^1$,
\[ \mathbb{P}( \, \omega \, : \, \exists \, t \in \mathbb{T}^+ \textrm{ s.t.~} d(\varphi(t,\omega)x,\varphi(t,\omega)y) < d(x,y) \, ) \ > \ 0. \]
\end{defi}

\begin{lemma} \label{contr}
Suppose $\varphi$ is contractible, and fix any $x,y \in \mathbb{S}^1$. For $\mathbb{P}$-almost all $\omega \in \Omega$ there is an unbounded increasing sequence $(t_n)$ in $\mathbb{T}^+$ such that $d(\varphi(t_n,\omega)x,\varphi(t_n,\omega)y) \to 0$ as $n \to \infty$.
\end{lemma}

\noindent For a proof, see see Section~4.1 of [New17]. (A similar statement can also be found in [BS88, Proposition~4.1].)
\\ \\
Now define the \emph{anticlockwise distance function} $d_+:\mathbb{S}^1 \times \mathbb{S}^1 \to [0,1)$ by
\[ d_+(x,y) \ = \ \min\{ r \geq 0 : \, \pi(x' + r) = y \} \]
\noindent where $x'$ may be any lift of $x$. Obviously $d_+$ is not symmetric, but rather satisfies the relation
\[ d_+(y,x) \ = \ 1 - d_+(x,y). \]
\noindent It is clear that for all $x,y \in \mathbb{S}^1$,
\[ d(x,y) \ = \ \left\{ \! \begin{array}{c l} d_+(x,y) & \textrm{if } d_+(x,y) \leq \frac{1}{2} \\ d_+(y,x) & \textrm{if } d_+(x,y) \geq \frac{1}{2}. \end{array} \right. \]
\noindent Note that $d_+$ is continuous on the set $\{(x,y) \in \mathbb{S}^1 \times \mathbb{S}^1 : x \neq y\}$. For any interval $I \subset \mathbb{R}$ of positive length less than 1, letting $x_1:=\pi(\inf I)$, $x_2:=\pi(\sup I)$ and $J:=\pi(I)$, we have that
\[ l(\varphi(t,\omega)J) \ = \ d_+(\varphi(t,\omega)x_1,\varphi(t,\omega)x_2) \]
\noindent for all $t$ and $\omega$.

\begin{defi}
We say that $\varphi$ is \emph{compressible} if for any distinct $x,y \in \mathbb{S}^1$,
\[ \mathbb{P}( \, \omega \, : \, \exists \, t \in \mathbb{T}^+ \textrm{ s.t.~} d_+(\varphi(t,\omega)x,\varphi(t,\omega)y) < d_+(x,y) \, ) \ > \ 0. \]
\end{defi}

\noindent By reversing the order of inputs, this is equivalent to saying that for any distinct $x,y \in \mathbb{S}^1$,
\[ \mathbb{P}( \, \omega \, : \, \exists \, t \in \mathbb{T}^+ \textrm{ s.t.~} d_+(\varphi(t,\omega)x,\varphi(t,\omega)y) > d_+(x,y) \, ) \ > \ 0. \]

\noindent Perhaps more intuitively, we can also define compressibility in terms of connected subsets of $\mathbb{S}^1$: $\varphi$ is compressible if and only if for every connected set $J \subset \mathbb{S}^1$ with $0<l(J)<1$,
\[ \mathbb{P}( \, \omega \, : \, \exists \, t \in \mathbb{T}^+ \textrm{ s.t.~} l(\varphi(t,\omega)J) < l(J) \, ) \ > \ 0. \]
\noindent Again, this is equivalent to saying that for every connected set $J \subset \mathbb{S}^1$ with $0<l(J)<1$,
\[ \mathbb{P}( \, \omega \, : \, \exists \, t \in \mathbb{T}^+ \textrm{ s.t.~} l(\varphi(t,\omega)J) > l(J) \, ) \ > \ 0. \]

\noindent Obviously, if $\varphi$ is compressible then $\varphi$ is contractible.

\begin{prop}
If $\varphi$ is compressible then for any $x,y \in \mathbb{S}^1$ and $\varepsilon>0$ there exists $t \in \mathbb{T}^+$ such that
\[ \mathbb{P}( \, \omega \, : \, d_+(\varphi(t,\omega)x,\varphi(t,\omega)y) < \varepsilon \, ) \ > \ 0. \]
\end{prop}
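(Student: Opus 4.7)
The plan is a proof by contradiction, combining upper semicontinuity on the space of off-diagonal pairs with a Markov-type use of the memoryless property. Write $D_t(u,v)(\omega):=d_+(\varphi(t,\omega)u,\varphi(t,\omega)v)$, and for $u \neq v$ define
\[ r(u,v) \ := \ \inf\bigl\{\alpha>0 : \exists\, t \in \mathbb{T}^+\!, \ \mathbb{P}(D_t(u,v)<\alpha)>0\bigr\}. \]
The proposition is equivalent to $r(x,y)=0$ for every $x \neq y$. Suppose, toward a contradiction, that $R := r(x,y)>0$ for some such pair. Two preliminary properties of $r$ will drive the argument: (a) each $\varphi(t,\omega)$ is a homeomorphism, so $(u,v) \mapsto D_t(u,v,\omega)$ is continuous off the diagonal; Fatou's lemma then gives lower semicontinuity of $(u,v) \mapsto \mathbb{P}(D_t(u,v)<\alpha)$, and hence upper semicontinuity of $r$ on the off-diagonal part of $\mathbb{S}^1 \times \mathbb{S}^1$; (b) compressibility, combined with a $\sigma$-additivity argument, gives $r(u,v)<d_+(u,v)$ for every $u \neq v$.

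The core step is a Markov-type identity. For each $n$, pick $t_n \in \mathbb{T}^+$ with $\mathbb{P}\bigl(D_{t_n}(x,y)<R+\tfrac{1}{n}\bigr)>0$, and set $u_n(\omega):=\varphi(t_n,\omega)x$, $v_n(\omega):=\varphi(t_n,\omega)y$. The cocycle property, the $(\theta^t)$-invariance of $\mathbb{P}$, and the independence of $\mathcal{F}_{t_n}$ from $\theta^{-t_n}\mathcal{F}_+$ combine to yield
\[ \mathbb{P}\bigl(D_{t_n+s}(x,y)<\alpha \bigm| \mathcal{F}_{t_n}\bigr)(\omega) \ = \ \mathbb{P}\bigl(D_s(u,v)<\alpha\bigr)\Big|_{(u,v)=(u_n(\omega),v_n(\omega))}. \]
For any $\alpha<R$ and any $s$ the unconditional probability $\mathbb{P}\bigl(D_{t_n+s}(x,y)<\alpha\bigr)$ equals zero by the definition of $R$, so the right-hand side vanishes $\mathbb{P}$-almost surely. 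Taking a countable intersection over $(s,\alpha) \in \mathbb{Q} \times (\mathbb{Q} \cap (0,R))$, and then extending to arbitrary $s \in \mathbb{T}^+$ and $\alpha \in (0,R)$ via the right-continuity axiom~(d), I obtain $r(u_n,v_n) \geq R$ almost surely. Since $d_+(u_n,v_n)=D_{t_n}(x,y) \geq R$ almost surely, I can select an $\omega$ in the positive-probability event $\{D_{t_n}(x,y)<R+\tfrac{1}{n}\}$ to produce a deterministic pair $(p_n,q_n)$ with $d_+(p_n,q_n) \in [R,R+\tfrac{1}{n})$ and $r(p_n,q_n) \geq R$.

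A subsequence of $(p_n,q_n)$ converges by compactness to some $(p_*,q_*) \in \mathbb{S}^1 \times \mathbb{S}^1$. Since $d_+(p_n,q_n) \to R > 0$, no such subsequence can accumulate on the diagonal, so $p_* \neq q_*$, and continuity of $d_+$ off the diagonal gives $d_+(p_*,q_*)=R$. Upper semicontinuity of $r$ finally yields $r(p_*,q_*) \geq \limsup_n r(p_n,q_n) \geq R = d_+(p_*,q_*)$, directly contradicting property~(b) applied to $(p_*,q_*)$. Hence $R=0$, which is the proposition. I expect the Markov-type identity to be the most delicate step: one has to invoke the independence condition~(ii) on the filtration carefully to justify conditioning on $\mathcal{F}_{t_n}$ in the form claimed, and in the continuous-time case one must pass from rational $(s,\alpha)$ to arbitrary values via right-continuity.
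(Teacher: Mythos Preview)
Your argument is correct. Both proofs proceed by contradiction and exploit the Markov property of the two-point motion together with compactness, but they package these ingredients differently. The paper works set-theoretically: it invokes the smallest closed forward-invariant set $G_{(x,y)}$ for the two-point motion (citing [New15, Lemma~1.2.3] for the fact that $G_{(x,y)}$ avoids the strip $\{0<d_+<\varepsilon\}$), and then simply minimises $d_+$ over the compact set $K=G_{(x,y)}\cap\{\varepsilon\le d_+\le d_+(x,y)\}$ to obtain a pair $(\bar u,\bar v)$ at which compressibility fails. Your approach is function-theoretic: the scalar $r(u,v)$ encodes essentially the same information (the infimum of $d_+$ over the orbit closure of $(u,v)$), and your Markov identity plus limiting argument reconstruct by hand what the paper gets from the invariance of $G_{(x,y)}$. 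The paper's route is shorter because it outsources the Markov-propagation step to an external lemma; your route is self-contained and makes the use of the filtration hypotheses explicit, at the cost of the extra semicontinuity and countable-to-uncountable extension steps. Either way, the witness pair produced at the end plays the same role: a point in the orbit closure where $d_+$ is minimal and hence cannot decrease, contradicting compressibility.
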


\begin{proof}
Suppose we have $x,y \in \mathbb{S}^1$ and $\varepsilon>0$ such that for all $t \in \mathbb{T}^+$,
\[ \mathbb{P}( \, \omega \, : \, d_+(\varphi(t,\omega)x,\varphi(t,\omega)y) < \varepsilon \, ) \ = \ 0. \]
\noindent Let $\Delta$ be the diagonal in $\mathbb{S}^1 \times \mathbb{S}^1$, i.e.~$\Delta=\{(\xi,\xi):\xi \in \mathbb{S}^1\}$. (From now on, we follow the terminology of Section~2.2 of [New17].) Let $G_{(x,y)} \subset \mathbb{S}^1 \times \mathbb{S}^1$ be the smallest closed set containing $(x,y)$ that is forward-invariant under the two-point motion of $\varphi$. The open set $\{(u,v) \in \mathbb{S}^1 \times \mathbb{S}^1 : 0 < d_+(u,v) < \varepsilon\}$ is not accessible from $(x,y)$, and therefore (e.g.~by [New17, Lemma~2.2.3]) $G_{(x,y)}$ does not intersect this set. So if we let $(\bar{u},\bar{v})$ be a point from the compact set $K:=\{ (u,v) \in G_{(x,y)} : \varepsilon \leq d_+(u,v) \leq d_+(x,y) \}$ which minimises $d_+$ on $K$, then $(\bar{u},\bar{v})$ will minimise $d_+$ on the whole of $G_{(x,y)} \setminus \Delta$. Since $G_{(x,y)}$ is forward-invariant, we then have that
\[ \mathbb{P}( \, \omega \, : \, \exists \, t \in \mathbb{T}^+ \textrm{ s.t.~} 0 < d_+(\varphi(t,\omega)\bar{u},\varphi(t,\omega)\bar{v}) < d_+(\bar{u},\bar{v}) \, ) \ = \ 0. \]
\noindent Obviously, since $\varphi(t,\omega)$ is bijective for all $t$ and $\omega$, $d_+(\varphi(t,\omega)\bar{u},\varphi(t,\omega)\bar{v})$ cannot be 0; so we can simply write that
\[ \mathbb{P}( \, \omega \, : \, \exists \, t \in \mathbb{T}^+ \textrm{ s.t.~} d_+(\varphi(t,\omega)\bar{u},\varphi(t,\omega)\bar{v}) < d_+(\bar{u},\bar{v}) \, ) \ = \ 0. \]
\noindent Thus $\varphi$ is not compressible.
\end{proof}

\begin{defi}
We say that $\varphi$ \emph{has reverse-minimal dynamics} if the only open forward-invariant sets are $\emptyset$ and $\mathbb{S}^1$.
\end{defi}

\noindent Obviously if $\varphi$ has a deterministic fixed point $p$ then $\mathbb{S}^1 \setminus \{p\}$ is forward-invariant, and so $\varphi$ does not have reverse-minimal dynamics.

\begin{prop} \label{cts}
If $\mathbb{T}^+=[0,\infty)$ and $\varphi$ is a continuous RDS, then the following are equivalent:
\begin{enumerate}[\indent (i)]
\item $\varphi$ has reverse-minimal dynamics;
\item the only \emph{closed} forward-invariant sets are $\emptyset$ and $\mathbb{S}^1$.
\end{enumerate}
\end{prop}

\begin{rmk}
In general, when $\emptyset$ and $\mathbb{S}^1$ are the only closed forward-invariant sets, we say that $\varphi$ \emph{has minimal dynamics}. So Proposition~\ref{cts} says that for continuous RDS in continuous time, minimal dynamics and reverse-minimal dynamics are equivalent.
\end{rmk}

\begin{proof}[Proof of Proposition \ref{cts}]
We first show that (i)$\Rightarrow$(ii). Suppose we have a closed forward-invariant non-empty proper subset $G$ of $\mathbb{S}^1$; we need to show that there exists an open forward-invariant non-empty proper subset $U$ of $\mathbb{S}^1$. Firstly, if $G$ is a singleton $\{p\}$ then $U:=\mathbb{S}^1 \setminus \{p\}$ is clearly forward-invariant. Now consider the case that $G$ is not a singleton, and let $V$ be a connected component of $\mathbb{S}^1 \setminus G$; we will show that $U:=\mathbb{S}^1 \setminus \bar{V}$ is forward-invariant. (Note that $U$ is non-empty, since $G$ is not a singleton.) Fix any $\omega$ with the property that $\varphi(t,\omega)G \subset G$ for all $t \in \mathbb{T}^+$. Since $\partial V \subset G$, we have that for all $t$, $\varphi(t,\omega)\partial V \subset G$ and therefore in particular $\varphi(t,\omega)\partial V \, \cap \; V = \emptyset$. Now since $\varphi$ is a continuous RDS, it is clear that we can define continuous functions $a,b:[0,\infty) \to \mathbb{R}$ with $a<b$ such that $[a(t),b(t)]$ is a lift of $\varphi(t,\omega)\bar{V}$ for all $t$. (So $\{a(t),b(t)\}$ projects onto $\varphi(t,\omega)\partial V$ for all $t$.) For all $t$, since $\varphi(t,\omega)\partial V \, \cap \; V = \emptyset$, we have that $a(t),b(t) \nin (a(0),b(0))$. Therefore (due to the intermediate value theorem), $a(t) \leq a(0)$ for all $t$ and $b(t) \geq b(0)$ for all $t$. Hence $\bar{V} \subset \varphi(t,\omega)\bar{V}$ for all $t$. Since $\varphi(t,\omega)$ is bijective for all $t$, it follows that $\varphi(t,\omega)U \subset U$ for all $t$. So $U$ is forward-invariant.
\\ \\
Now, in order to show that (ii)$\Rightarrow$(i), first observe that a set $A \subset \mathbb{S}^1$ is forward-invariant if and only if $\mathbb{P}$-almost every $\omega \in \Omega$ has the property that for all $t \in \mathbb{T}^+$,
\[ \varphi(t,\omega)^{-1}(X \setminus A) \ \subset \ X \setminus A. \]
\noindent Hence the fact that (ii)$\Rightarrow$(i) follows from the fact that (i)$\Rightarrow$(ii), except with $\varphi(t,\omega)$ replaced by $\varphi(t,\omega)^{-1}$.
\end{proof}

\begin{prop} \label{comthm}
Suppose $\varphi$ is contractible and has no deterministic fixed point. Then $\varphi$ is compressible if and only if $\varphi$ has reverse-minimal dynamics; and in this case, $\varphi$ is stably synchronising.
\end{prop}

\begin{rmk}
At least in discrete time, if $\varphi$ is contractible and has no deterministic fixed point, then it actually follows automatically that $\varphi$ is stably synchronising, and moreover at an exponential rate; see [Mal14]. (Nonetheless, our proof of stable synchronisation under the additional assumption of compressibility/reverse-minimality is simpler and more elementary than the proof for the results in [Mal14].)
\end{rmk}

\subsection*{Proof of Proposition~\ref{comthm}}

\begin{lemma}
Suppose $\varphi$ is compressible and has no deterministic fixed point. Then $\varphi$ has reverse-minimal dynamics.
\end{lemma}

\begin{proof}
Suppose for a contradiction that $\varphi$ does not have reverse-minimal dynamics, and let $U$ be an open forward-invariant non-empty proper subset of $\mathbb{S}^1$. Let $V$ be a maximal-length connected component of $U$. Since there are no deterministic fixed points, $\mathbb{S}^1 \setminus U$ is not a singleton and so $l(V)<1$. Hence, since $\varphi$ is compressible, there is a positive-measure set of sample points $\omega \in \Omega$ for each of which, for some $t \in \mathbb{T}^+$, $l(\varphi(t,\omega)V)>l(V)$. However, $\varphi(t,\omega)V$ is connected for all $t$ and $\omega$, and so if $l(\varphi(t,\omega)V)>l(V)$ then $\varphi(t,\omega)V$ cannot be a subset of $U$. This contradicts the fact that $U$ is forward-invariant.
\end{proof}

\begin{lemma} \label{diffuse}
Suppose $\varphi$ is contractible and admits a reverse-stationary probability measure $\rho$ that is atomless and has full support. Then $\varphi$ is synchronising.
\end{lemma}

\noindent (We will soon prove that under these same conditions, $\varphi$ is in fact \emph{stably} synchronising.)

\begin{proof}
Fix any distinct $x,y \in \mathbb{S}^1$. Let $J \subset \mathbb{S}^1$ be a connected set with $\partial J = \{x,y\}$. By Lemmas~\ref{mart} and \ref{contr}, there is a $\mathbb{P}$-full set of sample points $\omega$ with the properties that
\begin{enumerate}[\indent (a)]
\item there exists an unbounded increasing sequence $(t_n)$ in $\mathbb{T}^+$ such that
\[ d(\varphi(t_n,\omega)x,\varphi(t_n,\omega)y) \to 0 \hspace{3mm} \textrm{as } n \to \infty \, ; \]
\item $\rho(\varphi(t,\omega)J)$ is convergent as $t \to \infty$.
\end{enumerate}
\noindent Fix any $\omega$ with both these properties, and let $(t_n)$ be as in (a). For any $n$, $d(\varphi(t_n,\omega)x,\varphi(t_n,\omega)y)$ is precisely the smaller of $l(\varphi(t_n,\omega)J)$ and $1 - l(\varphi(t_n,\omega)J)$. Hence there must exist a subsequence $(t_{m_n})$ of $(t_n)$ such that either $l(\varphi(t_{m_n},\omega)J) \to 0$ as $n \to \infty$ or $l(\varphi(t_{m_n},\omega)J) \to 1$ as $n \to \infty$. Since $\rho$ is atomless, it follows that either $\rho(\varphi(t_{m_n},\omega)J) \to 0$ as $n \to \infty$ or $\rho(\varphi(t_{m_n},\omega)J) \to 1$ as $n \to \infty$. Since $\rho(\varphi(t,\omega)J)$ is convergent as $t \to \infty$, it follows that either $\rho(\varphi(t,\omega)J) \to 0$ as $t \to \infty$ or $\rho(\varphi(t,\omega)J) \to 1$ as $t \to \infty$. Since $\rho$ has full support, it follows that either $l(\varphi(t,\omega)J) \to 0$ as $t \to \infty$ or $l(\varphi(t,\omega)J) \to 1$ as $t \to \infty$. Hence $d(\varphi(t,\omega)x,\varphi(t,\omega)y) \to 0$ as $t \to \infty$.
\end{proof}

\begin{lemma} \label{diffusecom}
Under the hypotheses of Lemma~\ref{diffuse}, for any connected $J \subset \mathbb{S}^1$,
\[ \mathbb{P}( \, \omega \, : \, l(\varphi(t,\omega)J) \to 0 \ \mathrm{as} \ t \to \infty \, ) \ = \ 1 - \rho(J). \]
\end{lemma}

\begin{proof}
Fix any connected $J \subset \mathbb{S}^1$. As in the proof of Lemma~3.3, we have that for $\mathbb{P}$-almost every $\omega \in \Omega$, either
\[ \rho(\varphi(t,\omega)J) \to 0 \hspace{3mm} \textrm{and} \hspace{3mm} l(\varphi(t,\omega)J) \to 0  \hspace{3mm} \textrm{as } t \to \infty. \]
\noindent or
\[ \rho(\varphi(t,\omega)J) \to 1 \hspace{3mm} \textrm{and} \hspace{3mm} l(\varphi(t,\omega)J) \to 1  \hspace{3mm} \textrm{as } t \to \infty. \]
\noindent So then, letting $E$ denote the set of sample points $\omega$ for which the latter scenario holds, the dominated convergence theorem gives that as $t \to \infty$,
\[ \int_\Omega \rho(\varphi(t,\omega)J) \, \mathbb{P}(d\omega) \ \to \ \int_\Omega \mathbbm{1}_E(\omega) \, \mathbb{P}(d\omega) \ = \ \mathbb{P}(E). \]
\noindent But we also know that for any $t$,
\[ \int_\Omega \rho(\varphi(t,\omega)J) \, \mathbb{P}(d\omega) \ = \ \rho(J). \]
\noindent Hence $\mathbb{P}(E)=\rho(J)$, i.e.~the probability of the latter scenario is $\rho(J)$ and the probability of the former scenario is $1-\rho(J)$.
\end{proof}

\noindent Combining Lemmas~\ref{diffuse} and \ref{diffusecom}, we have:

\begin{cor} \label{diffusess}
Under the hypotheses of Lemma~\ref{diffuse}, $\varphi$ is stably synchronising.
\end{cor}

\begin{proof}
We already know (from Lemma~\ref{diffuse}) that $\varphi$ is synchronising. Now fix any $x \in X$. Let $(U_n)_{n \in \mathbb{N}}$ be a nested sequence of connected neighbourhoods of $x$ such that $\bigcap_n \hspace{-0.2mm} U_n = \{x\}$. For each $n$,
\begin{align*}
\mathbb{P}( \, \omega \, : \, \exists \, \textrm{open } & U \! \ni x \, \textrm{ s.t.~} l(\varphi(t,\omega)U) \to 0 \textrm{ as } t \to \infty \, ) \\
&\geq \ \mathbb{P}( \, \omega \, : \, l(\varphi(t,\omega)U_n) \to 0 \ \mathrm{as} \ t \to \infty \, ) \\
&= \ 1 - \rho(U_n).
\end{align*}
\noindent But since $\rho$ is atomless, $\rho(U_n) \to 0$ as $n \to \infty$. Hence
\[ \mathbb{P}( \, \omega \, : \, \exists \, \textrm{open } U \! \ni x \, \textrm{ s.t.~} l(\varphi(t,\omega)U) \to 0 \textrm{ as } t \to \infty \, ) \ = \ 1. \]
\noindent So we are done.
\end{proof}

\begin{lemma} \label{open}
Given a dense subset $D$ of $\mathbb{T}^+$, an open set $A \subset \mathbb{S}^1$ is forward-invariant if and only if for each $t \in D$, for $\mathbb{P}$-almost all $\omega \in \Omega$, $\varphi(t,\omega)A \subset A$.
\end{lemma}

\begin{proof}
The ``only if'' direction is clear. Now fix an open set $A \subset \mathbb{S}^1$, and let $D$ be a dense subset of $\mathbb{T}^+$ such that for each $t \in D$, for $\mathbb{P}$-almost all $\omega \in \Omega$, $\varphi(t,\omega)A \subset A$. Let $\tilde{D}$ be a countable dense subset of $D$. $\mathbb{P}$-almost every $\omega \in \Omega$ has the property that for every $t \in \tilde{D}$, $\varphi(t,\omega)A \subset A$ and so $\varphi(t,\omega)^{-1}(\mathbb{S}^1 \setminus A) \subset \mathbb{S}^1 \setminus A$. But since $\mathbb{S}^1 \setminus A$ is closed and the map $t \mapsto \varphi(t,\omega)^{-1}(x)$ is right-continuous for each $x$ and $\omega$, it follows that $\mathbb{P}$-almost every $\omega$ has the property that for every $t \in \mathbb{T}^+$, $\varphi(t,\omega)^{-1}(\mathbb{S}^1 \setminus A) \subset \mathbb{S}^1 \setminus A$ and so $\varphi(t,\omega)A \subset A$. So we are done.
\end{proof}

\begin{lemma} \label{statinv}
For any reverse-stationary probability measure $\rho$, $\mathbb{S}^1 \setminus \mathrm{supp}\,\rho$ is forward-invariant.
\end{lemma}

\begin{proof}
Let $\rho$ be a reverse-stationary probability measure, and let $U:=\mathbb{S}^1 \setminus \mathrm{supp}\,\rho$. For each $t \in \mathbb{T}^+$,
\[ 0 \ = \ \rho(U) \ = \ \int_\Omega \rho(\varphi(t,\omega)U) \, \mathbb{P}(d\omega). \]
\noindent Therefore, for each $t \in \mathbb{T}^+$, for $\mathbb{P}$-almost all $\omega$, $\rho(\varphi(t,\omega)U)=0$ and so $\varphi(t,\omega)U \subset U$. Hence, by Lemma~\ref{open}, $U$ is forward-invariant.
\end{proof}

\begin{lemma} \label{rmdiff}
If $\varphi$ has reverse-minimal dynamics then every reverse-stationary probability measure is atomless and has full support.
\end{lemma}

\begin{proof}
By Lemma~\ref{statinv}, if $\varphi$ has reverse-minimal dynamics then every reverse-stationary probability measure has full support. Now suppose that $\varphi$ has reverse-minimal dynamics and let $\rho$ be a probability measure on $\mathbb{S}^1$ that is not atomless; we will show that $\rho$ is not reverse-stationary. Let $m:=\max\{ \rho(\{x\}) : x \in \mathbb{S}^1 \}$ and let $P:=\{x \in \mathbb{S}^1 : \rho(\{x\})=m \}$. (So $\rho(P)=m|P|$.) Since $\varphi$ has reverse-minimal dynamics, $\mathbb{S}^1 \setminus P$ is not forward-invariant and so (by Lemma~\ref{open}) there must exist $t_0 \in \mathbb{T}^+$ such that
\[ \mathbb{P}( \, \omega \, : \, P \neq \varphi(t_0,\omega)P ) \ > \ 0. \]
\noindent Obviously, for any $\omega$, if $P \neq \varphi(t_0,\omega)P$ then $\rho(\varphi(t_0,\omega)P)<\rho(P)$. Hence we have that
\[ \int_\Omega \rho(\varphi(t_0,\omega)P) \, \mathbb{P}(d\omega) \ < \ \rho(P). \]
\noindent Thus $\rho$ is not reverse-stationary.
\end{proof}

\noindent Combining Lemma~\ref{rmdiff} with Corollary~\ref{diffusess}, we have that if $\varphi$ has reverse-minimal dynamics then $\varphi$ is stably synchronising.

\begin{lemma}
Under the hypotheses of Lemma~\ref{diffuse}, $\varphi$ is compressible.
\end{lemma}

\begin{proof}
For any connected $J \subset \mathbb{S}^1$ with $0 < l(J) < 1$, since $\rho$ has full support, $\rho(J)<1$. Hence, by Lemma~\ref{diffusecom}, there is a positive-measure set of sample points $\omega$ such that $l(\varphi(t,\omega)J) \to 0$ as $t \to \infty$. So in particular, $\varphi$ is compressible.
\end{proof}

\noindent So we are done.

\subsection*{Additive-noise SDE}

Large classes of ordinary differential equations in Euclidean space have been proven to exhibit synchronous behaviour after the addition of Gaussian white noise to the right-hand side (see [CF98] for the one-dimensional case, and [FGS17] for higher-dimensional cases). We shall now do the same for ODEs on $\mathbb{S}^1$. Let $(\Omega,\mathcal{F},\mathbb{P},(\theta^t),(W_t))$ be as in Example~2.1. Let $b:\mathbb{R} \to \mathbb{R}$ be a 1-periodic Lipschitz function, and let $\varphi$ be the RDS on $\mathbb{S}^1$ with trajectories $(\varphi(t,\omega)x)_{t \geq 0}$ whose lifts to $\mathbb{R}$ satisfy the integral equation
\begin{equation} \label{sde} X_t(\omega) \ = \ X_0(\omega) \, + \, \int_0^t b(X_s(\omega)) \hspace{0.1mm} ds \, + \, \sigma \hspace{0.1mm} W_t(\omega) \end{equation}
\noindent (where $\sigma > 0$). It is clear that if $b$ is $\frac{1}{n}$-periodic for some $n > 1$ then $\varphi$ is not synchronising, since any two trajectories starting at a distance $\frac{1}{n}$ apart will remain of distance $\frac{1}{n}$ apart. In the converse direction, Proposition~\ref{comthm} yields the following result:

\begin{prop}
If the least period of $b$ is $1$ then $\varphi$ is stably synchronising.
\end{prop}

\begin{proof}
$\varphi$ clearly has no deterministic fixed points, so we just need to show that $\varphi$ is compressible. Fix a connected $J \subset \mathbb{S}^1$ with $0 < l(J) < 1$. Now, regarding $\Omega$ as being equipped with the topology of uniform convergence on bounded intervals, it is known ([Bur83, Theorem~3.4.1]) that for any $x \in \mathbb{S}^1$ and $t \geq 0$ the map $\omega \mapsto \varphi(t,\omega)x$ is continuous, and it is also known ([Fre13, Proposition~477F]) that the Wiener measure $\mathbb{P}$ assigns strictly positive measure to every non-empty open subset of $\Omega$. In view of these facts, in order to prove compressibility, we just need to find one sample point $\omega_0 \in \Omega$ with the property that at some time $t>0$, $l(\varphi(t,\omega_0)J)<l(J)$. Let $[c_1,c_2] \subset \mathbb{R}$ be a lift of $\bar{J}$ (so $c_2-c_1=l(J)$). Since $b$ is continuous and periodic but \emph{not} $l(J)$-periodic, there must exist $a > c_1$ such that $b(a+l(J))<b(a)$. Now for large $\eta > 0$, consider a sample point $\omega_0^{(\eta)}$ whose path on the time-interval $[0,1]$ is given by
\[ \omega_0^{(\eta)}(t) \ = \ \left\{ \!\! \begin{array}{c l} \eta t & t \in [0,\frac{a - c_1}{\sigma\eta}] \\ \frac{a - c_1}{\sigma} & t \in [\frac{a - c_1}{\sigma\eta},1]. \end{array} \right. \]
\noindent Let $a_1(t)$ be the solution of (\ref{sde}) with $\omega:=\omega_0^{(\eta)}$ and $X_0:=c_1$. Let $a_2(t)$ be the solution of (\ref{sde}) with $\omega:=\omega_0^{(\eta)}$ and $X_0:=c_2$. (So $[a_1(t),a_2(t)]$ is a lift of $\varphi(t,\omega_0^{(\eta)})\bar{J}$ for all $t$.) Provided $\eta$ is sufficiently large: $a_1(\frac{a - c_1}{\sigma\eta})$ will be very close to $a$, and $a_2(\frac{a - c_1}{\sigma\eta})$ will be very close to $a+l(J)$; hence in particular, $b(a_2(\frac{a - c_1}{\sigma\eta}))<b(a_1(\frac{a - c_1}{\sigma\eta}))$, and so there will exist $\delta>0$ such that at time $t:=\frac{a - c_1}{\sigma\eta}+\delta$, we have $l(\varphi(t,\omega_0^{(\eta)})J)=a_2(t)-a_1(t)<l(J)$. Thus $\varphi$ is compressible. Hence Proposition~\ref{comthm} gives that $\varphi$ is stably synchronising.
\end{proof}

\subsubsection*{Acknowledgement}

I would like to thank Dr~Martin Rasmussen, Prof~Jeroen Lamb, Prof~Thomas Kaijser and Maximilian Engel for helpful and interesting discussions. This research was supported by an EPSRC Doctoral Training Account, an EPSRC Doctoral Prize Fellowship, and the DFG grant CRC 701, \emph{Spectral Structures and Topological Methods in Mathematics}.

\subsubsection*{References:}

{[Ant84]} Antonov, V.~A., Modeling of processes of cyclic evolution type. Synchronization by a random signal, \emph{Vestnik Leningradskogo Universiteta Matematika, Mekhanika, Astronomiya} vyp.~2, pp67--76.
\\ \\
{[Arn98]} Arnold, L., \emph{Random Dynamical Systems}, Springer, Berlin Heidelberg New York.
\\ \\
{[Bax86]} Baxendale, P.~H., Asymptotic behaviour of stochastic flows of diffeomorphisms, \emph{Stochastic processes and their applications}, pp1--19.
\\ \\
{[BS88]} Baxendale, P.~H., Stroock, D.~W., Large Deviations and Stochastic Flows of Diffeomorphisms, \emph{Probability Theory and Related Fields} \textbf{80}(2), pp169--215.
\\ \\
{[Bur83]} Burton, T.~A., \emph{Volterra integral and differential equations}, Academic Press, New~York London.
\\ \\
{[CF98]} Crauel, H., Flandoli, F., Additive Noise Destroys a Pitchfork Bifurcation, \emph{Journal of Dynamics and Differential Equations} \textbf{10}(2), pp259--274.
\\ \\
{[Crau02]} Crauel, H., Invariant measures for random dynamical systems on the circle, \emph{Archiv der Mathematik} \textbf{78}(2), pp145--154.
\\ \\
{[Crau02a]} Crauel, H., \emph{Random Probability Measures on Polish Spaces}, Stochastics Monographs 11, Taylor \& Francis, London.
\\ \\
{[FGS17]} Flandoli, F., Gess, B., Scheutzow, M., Synchronization by noise, \emph{Probability Theory and Related Fields} \textbf{168}(3--4), pp511--556.
\\ \\
{[Fre13]} Fremlin, D.~H., \emph{Measure Theory, Volume 4: Topological Measure Spaces} (Chapter~47, version of 8th~April 2013), \url{https://www.essex.ac.uk/maths/people/fremlin/chap47.pdf}.
\\ \\
{[Kai93]} Kaijser, T., On stochastic perturbations of iterations of circle maps, \emph{Physica~D} \textbf{68}(2), pp201--231.
\\ \\
{[Kif86]} Kifer, Y., \emph{Ergodic Theory of Random Transformations}, Birkh\"{a}user, Boston.
\\ \\
{[KN04]} Kleptsyn, V.~A., Nalskii, N.~B., \emph{Contraction of orbits in random dynamical systems on the circle}, Functional Analysis and Its Applications \textbf{38}(4), pp267--282.
\\ \\
{[KS12]} Kuksin, S., Shirikyan, A., \emph{Mathematics of Two-Dimensional Turbulence}, Cambridge Tracts in Mathematics 194, Cambridge University Press, Cambridge~New~York.
\\ \\
{[LeJ87]} Le Jan, Y., \'{E}quilibre statistique pour les produits de diff\'{e}omorphismes al\'{e}atoires ind\'{e}pendants, \emph{Annales de l'Institut Henri Poincar\'{e} Probabilit\'{e}s et Statistiques} \textbf{23}(1), pp111--120.
\\ \\
{[Mal14]} Malicet, D., Random walks on $\mathrm{Homeo}(S^1)$, \url{http://perso.crans.org/mdominique/randomwalks.pdf}.
\\ \\
{[New15]} Newman, J., Ergodic Theory for Semigroups of Markov Kernels (version of 5th~July 2015), \url{http://wwwf.imperial.ac.uk/~jmn07/Ergodic_Theory_for_Semigroups_of_Markov_Kernels.pdf}.
\\ \\
{[New17]} Newman, J., Necessary and sufficient conditions for stable synchronization in random dynamical systems, \emph{Ergodic Theory and Dynamical Systems}, 1-19. doi:10.1017/etds.2016.109.
\\ \\
{[Will91]} Williams, D., \emph{Probability with Martingales}, Cambridge University Press, Cambridge.

\end{document}